\documentclass[smallextended,envcountsect,envcountsame]{svjour3}
\usepackage{graphicx}
\usepackage{mathrsfs}
\usepackage{times}
\usepackage{fix-cm}
\usepackage{amsmath}
\usepackage{amssymb}
\usepackage{latexsym}
\usepackage{dsfont}
\usepackage{xcolor}
\usepackage{cite}
\usepackage{enumitem}
\usepackage{appendix}
\usepackage{amsmath, amssymb}

	
\usepackage{bbm}
\setlength {\marginparwidth }{2cm} 
\usepackage{todonotes}



%
\smartqed  
%

%
%
%
%
%
\makeatletter
\def\namedlabel#1#2{\begingroup
	#2%
	\def\@currentlabel{#2}%
	\phantomsection\label{#1}\endgroup
}
\makeatother

\renewcommand{\H}{\mathcal{H}}
\newcommand{\R}{\mathbb{R}}
\newcommand{\Rex}{\overline{\mathbb{R}}}
\newcommand{\N}{\mathbb{N}}
\newcommand{\tto}{\rightrightarrows}
\newcommand{\gph}{\operatorname{gph}}
\newcommand{\dom}{\operatorname{dom}}

\renewcommand{\epsilon}{\varepsilon}




\begin{document}
	
	\title{Characterization of Strong Local Maximal Monotonicity through Graphical Derivatives
	
	\thanks{The author was supported by ANID Chile under grants CMM BASAL funds for Center of Excellence FB210005, Project ECOS230027, MATH-AMSUD 23-MATH-17, and ANID BECAS/DOCTORADO NACIONAL 21230802.}
 }
	\titlerunning{Characterization of Strong Local Maximal Monotonicity}        
	
	\author{Juan Guillermo Garrido
	}
	
	\institute{Juan Guillermo Garrido  \at Departamento de Ingenier\'ia Matem\'atica,  Universidad de Chile, Santiago, Chile. \email{ jgarrido@dim.uchile.cl } }
	
	\date{Received: date / Accepted: date}

	\maketitle
	
	\begin{abstract}
		This paper is devoted to study a characterization of (strong) local maximal monotonicity in terms of a property involving the graphical derivative of a set-valued mapping defined on a Hilbert space. As a consequence, a second-order characterization of variational convexity is provided without the assumption of subdifferential continuity.
		\keywords{Variational analysis \and  Local monotonicity \and Prox regular functions\and Variational convexity\and Generalized differentiation}
		 \subclass{49J52\and 49J53 \and 47H05}
	\end{abstract}
	
	\section{Introduction}
    Local monotonicity often appears in variational analysis. For instance, prox regular functions can be characterized via the local monotonicity of subdifferential mapping (see, e.g., \cite{Poliquin1996, MR2113863}). Recently, Rockafellar introduced the concept of variational convexity in finite-dimensional spaces \cite{MR3995335}, motivated by the local maximal monotonicity of subdifferential mappings. This concept of functions has gained significant attention in recent years, as it has enabled the generalization of numerical algorithms for solving nonsmooth optimization problems (see, e.g., \cite{MR4550949,MR3995335}). In \cite{MR4784083}, variational convexity is further studied in general Banach spaces.

    Generalized differentiation is a fundamental tool for studying the first-order variational properties of set-valued mappings. In particular, the graphical derivative is used to establish criteria for Lipschitzian properties such as metric regularity/subregularity and strong metric regularity (see, e.g., \cite{MR2252233, MR3108434, MR3705353}). Furthermore, the graphical derivative can be employed to extend numerical algorithms to nonsmooth settings, as in the case of Newton's method (see, e.g., \cite{MR4252735, aragon2024coderivative}). Additionally, the explicit computation of graphical derivatives for certain important cases is an active area of research, as it contributes to the study of the stability of solution mappings (see, e.g., \cite{MR3288139, MR3485980, MR4769808}). For instance, workable calculus rules for graphical derivatives are provided in \cite{MR3428695, MR3693740}. 

    In this paper, we aim to study a characterization of local maximal monotonicity via graphical derivatives for general set-valued mappings in Hilbert spaces, as well as its corresponding counterparts for strong monotonicity and hypomonotonicity. In particular, for strong local maximal monotonicity, we provide several characterizations, all of which depend on the lower-definiteness of the graphical derivative (or strict) but require different additional assumptions, such as local maximal hypomonotonicity or strong metric regularity.

    A parallel result using coderivatives can be found in \cite[Theorem 3.4]{MR3485980} (see also \cite[Theorem 6.3]{khanh2025local}). Previously, a related result was established in \cite[Theorem 3.3]{MR3843842}, where the tilt stability property for prox-regular functions is characterized as the lower definiteness of graphical derivative of the subdifferential mapping. Later, in \cite[Theorem 4.3]{gfrerer2025}, second-order characterizations of variational convexity for prox-regular functions were provided, and in the same line, in \cite{Rockafellar2025} are also studied these kind of results from a point-based perspective.

    The paper is organized as follows: In Section 2, we present the necessary mathematical background and notation for the subsequent sections, along with some preparatory results. Then, in Section 3, we establish the main result of the paper and provide examples to illustrate the necessity of our hypotheses. In Section 4, a characterization of variational convexity is developed through a condition for $f$-attentive graphical derivative of subdifferential mapping, avoiding subdifferential continuity. The remainder of the paper are some concluding remarks.
    
    \section{Preliminaries and Auxiliary Results}
	Throughout the work, $\H$ stands for a general Hilbert space endowed by inner product $\langle \cdot, \cdot\rangle$ and a norm $\|\cdot\|$. The open (resp. closed) ball centered at $a\in\H$ and radius $r>0$ is denoted by $\mathbb{B}_r(a)$ (resp. $\mathbb{B}_r[a]$) and the closed unit ball is simply denoted by $\mathbb{B}$. For a given set $S\subset \H$, the distance function from $x$ to $S$ is denoted by $x\mapsto d(x;S)$, where we use the convention $d(x;S) = \infty$ whenever $S = \emptyset$.
	
	Given a set-valued mapping $F\colon \H\tto \H$, we define its domain as $\dom F := \{x\in\H : F(x)\neq\emptyset\}$ and its graph as $\gph F := \{(x,y)\in\H\times\H : y\in F(x)\}$.
	
	Let us recall that  a function $f\colon [0,1]\to \H$ is said to be absolutely continuous if for every $\epsilon>0$, there is $\delta>0$ such that for all pairwise disjoint set of intervals $(]a_i,b_i[)_{i\in\N}\subset [0,1]$ with $\sum_{j\in\N}(b_j-a_j)<\delta$, one has $\sum_{j\in\N}\|f(b_j)-f(a_j)\|<\epsilon$. The following proposition can be found in \cite[Theorem 2.2.17]{MR2168068}.
	\begin{proposition}\label{abs-con-car}
		Consider a function $f\colon [0,1]\to \H$. Suppose that $f$ is absolutely continuous, then $f$ is derivable a.e., $f'\in L^1([0,1];\H)$ and $$f(t) - f(s) = \int_s^t f'(\tau)d\tau, \forall s,t\in [0,1].$$ 
	\end{proposition}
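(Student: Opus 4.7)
The plan is to route the statement through the theory of Bochner-integrable functions, exploiting the fact that Hilbert spaces are reflexive and hence enjoy the Radon--Nikodym property. First I would observe that, since $f$ is absolutely continuous it is in particular continuous, so $f([0,1])$ lies in the closed separable subspace spanned by $\{f(q):q\in\Q\cap [0,1]\}$; there is therefore no loss of generality in assuming $\H$ is separable, which makes weak, Borel, and Bochner measurability coincide.

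Next I would build a vector measure from the increments of $f$. Define on the semiring of half-open subintervals $\mu(]a,b]):=f(b)-f(a)$. The $\epsilon$-$\delta$ definition of absolute continuity of $f$ translates verbatim into the statement that $\mu$ has finite total variation on $[0,1]$ and that its variation is absolutely continuous with respect to Lebesgue measure $\lambda$. A Carath\'eodory-type extension argument then produces an $\H$-valued Borel measure still denoted $\mu$ with $\mu([s,t])=f(t)-f(s)$.

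Since $\H$ has the Radon--Nikodym property, there exists $g\in L^1([0,1];\H)$ with $\mu(E)=\int_E g\,d\lambda$ for every Borel $E$; choosing $E=]s,t]$ yields the integral representation $f(t)-f(s)=\int_s^t g(\tau)d\tau$. Finally, to identify $f'$ with $g$ almost everywhere, I would invoke the vector-valued Lebesgue differentiation theorem: the set of Lebesgue points of $g$ is conull, and at each such $t$ the difference quotient $h^{-1}(f(t+h)-f(t))=h^{-1}\int_t^{t+h}g(\tau)d\tau$ converges strongly to $g(t)$, proving that $f$ is differentiable almost everywhere with $f'=g\in L^1([0,1];\H)$.

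The main obstacle is the last step. The construction of $\mu$ and its absolute continuity are essentially the scalar argument transcribed, but in a general Banach space absolute continuity of $f$ need not imply a.e. differentiability; this is precisely where the Hilbert (more generally, reflexive or RNP) structure enters, both to obtain the density $g$ and to guarantee that the Lebesgue differentiation theorem for $L^1([0,1];\H)$ applies. One must therefore be careful to record that separability, which we reduced to at the outset, is exactly what makes the vector-valued differentiation theorem available.
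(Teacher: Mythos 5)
Your argument is correct in outline, and it is the standard way to prove this statement; the only thing to note is that the paper does not prove the proposition at all --- it simply cites it from the literature (Theorem 2.2.17 of the referenced monograph), so there is no internal proof to compare against step by step. Your route --- reduce to a separable Hilbert space, turn the increments $]a,b]\mapsto f(b)-f(a)$ into an $\H$-valued measure of bounded variation that is absolutely continuous with respect to Lebesgue measure, invoke the Radon--Nikodym property (which Hilbert spaces have, being reflexive) to get a density $g\in L^1([0,1];\H)$, and then use the vector-valued Lebesgue differentiation theorem at Lebesgue points of $g$ to identify $f'=g$ a.e. --- is exactly the classical proof of a.e.\ differentiability of absolutely continuous curves with values in an RNP space, and it is what the cited textbook result rests on. Two details you pass over quickly but should record if you write this out in full: (i) bounded variation of $f$ is a consequence of the $\epsilon$-$\delta$ condition (partition $[0,1]$ into finitely many intervals of length less than $\delta$, each contributing variation at most $\epsilon$), and the same condition shows the total variation measure vanishes on Lebesgue-null sets, which is what absolute continuity of the vector measure means; (ii) countable additivity of the premeasure on the semiring of half-open intervals, needed for the Carath\'eodory-type extension, also comes from the $\epsilon$-$\delta$ condition rather than being automatic. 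With these remarks your proof is complete and correctly locates where the Hilbert (RNP) structure is indispensable, since a.e.\ differentiability fails for absolutely continuous maps into general Banach spaces such as $L^1$.
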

    Let $\mathcal{K}$ be a metric space. For a given set-valued mapping $F\colon \mathcal{K}\tto \H$ we recall the following (see, e.g., \cite{zbMATH07502906})
 $$\limsup_{x'\to x} F(x') := \{ y\in\H : \exists (x_n,y_n)\to (x,y)  \text{ s.t. }\forall n\in\N, (x_n,y_n)\in \gph F\},$$
 and
 \begin{equation*}
\liminf_{x'\to x} F(x') :=\left\{
  y\in\H \;\middle|\;
  \begin{aligned}
  & \forall (x_n)\to x, \exists \text{ subsequence }(n_k)\subset \N,\\
  & \exists (y_{n_k})\to y, \forall k\in\N,(x_{n_k},y_{n_k})\in \gph F
  \end{aligned}
\right\}.
\end{equation*}
	Some concepts of local monotonicity are recalled. A set-valued mapping $F\colon \H\tto \H$ is said to be \emph{locally monotone} around $(\bar x,\bar y)\in \gph F$ if there are neighborhoods $U,V$ of $\bar x,\bar y$ respectively such that 
	\begin{equation}\label{hypo01}
		\langle y-y',x-x' \rangle\geq 0, \ \forall (x,y),(x',y')\in \gph F\cap (U\times V).
	\end{equation}
	We also say that $F\colon \H\tto \H$ is \emph{strongly locally monotone} at $(\bar x,\bar y)$ if there is $\kappa>0$ such that $F-\kappa \text{Id}$ is locally monotone at $(\bar x,\bar y-\kappa \bar x)$. Let us say that $\kappa$ is the modulus of strong local monotonicity. We say $F\colon \H\tto \H$ is \emph{locally hypomonotone} at $(\bar x,\bar y)$ if there is $\kappa>0$ such that $F+\kappa \text{Id}$ is locally monotone at $(\bar x,\bar y+\kappa \bar x)$.
	
	The set-valued mapping $F$ is said to be \emph{locally maximal monotone} at $(\bar x,\bar y)$ when $F$ is locally monotone at $(\bar x,\bar y)$ and there is a neighborhood $U\times V$ of $(\bar x,\bar y)$ such that for every globally monotone operator $S\colon \H\tto\H$ such that $\gph F\cap U\times V\subset \gph S$ then $\gph F\cap U\times V= \gph S\cap U\times V$. Similarly, we say that $F$ is \emph{locally maximal hypomonotone} at $(\bar x,\bar y)$ if there is $\rho>0$ such that $F+\rho\text{Id}$ is locally maximal monotone at $(\bar x,\bar y+\rho\bar x)$.
	
	A different definition of maximality was introduced previously in \cite{MR1886225}, which for reflexive Banach spaces it becomes equivalent to the one given here (see, e.g., \cite[Theorem 8.5]{MR4769808}). 
    
		We say $F$ is \emph{strongly locally maximal monotone} at $(\bar x,\bar y)$ with modulus $\kappa>0$ (see, e.g., \cite[Definition 6.5]{MR4769808}) if there is a neighborhood $U\times V$ of $(\bar x,\bar y)$ such that $$\langle y_1-y_2,x_1-x_2\rangle\geq \kappa\|x_1-x_2\|^2, \forall (x_1,y_1),(x_2,y_2)\in (U\times V)\cap \gph F,$$ and for every monotone operator $S\colon \H\tto\H$ such that $U\times V\cap\gph F\subset \gph S$ we have $U\times V\cap\gph F=U\times V\cap\gph S$.
	
	The following proposition shows that strong local maximal monotonicity can be expressed in terms of local maximal monotonicity.
	\begin{proposition}\label{prop-slmm-lmm}
		Let $F\colon \H\tto\H$ be a set-valued mapping,  $(\bar x,\bar y)\in \gph F$ and $\kappa>0$. Then, $F$ is strongly locally maximal monotone at $(\bar x,\bar y)$ with modulus $\kappa$ if and only if $F-\kappa\text{Id}$ is locally maximal monotone at $(\bar x,\bar y-\kappa\bar x)$. 
	\end{proposition}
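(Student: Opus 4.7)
The key tool for the proof is the linear homeomorphism $\Psi \colon \H \times \H \to \H \times \H$ defined by $\Psi(x, y) := (x, y - \kappa x)$, which bijectively sends $\gph F$ onto $\gph(F - \kappa\text{Id})$ and carries $(\bar x, \bar y)$ to $(\bar x, \bar y - \kappa \bar x)$. The algebraic identity
\[
\langle(y_1 - \kappa x_1) - (y_2 - \kappa x_2), x_1 - x_2\rangle = \langle y_1 - y_2, x_1 - x_2\rangle - \kappa\|x_1 - x_2\|^2
\]
converts $\kappa$-strong monotonicity of $F$ into plain monotonicity of $F - \kappa\text{Id}$, and the map $T \mapsto S := T + \kappa\text{Id}$ establishes a bijection between globally monotone operators $T$ and globally $\kappa$-strongly monotone (in particular, monotone) operators $S$. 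The proof consists in transferring both the monotonicity inequality and the maximality property through this correspondence, while carefully aligning product neighborhoods under $\Psi$.

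For the ``only if'' direction, I would start from a product neighborhood $U \times V$ witnessing strong local maximal monotonicity of $F$, and select open ball neighborhoods $U' \times V'$ of $(\bar x, \bar y - \kappa \bar x)$ whose radii are tuned so that the required inclusions $\Psi^{-1}(U' \times V') \subset U \times V$ and $\Psi(U \times V) \subset U' \times V'$ both hold on the parts of the argument where each is needed (after possibly shrinking $U \times V$ first). The monotonicity of $F - \kappa\text{Id}$ on $\gph(F - \kappa\text{Id}) \cap U' \times V'$ follows immediately from the identity. For the maximality, given a globally monotone $T$ containing $\gph(F - \kappa\text{Id}) \cap U' \times V'$, I set $S := T + \kappa\text{Id}$, note that it is globally monotone and contains $\gph F \cap U \times V$, and invoke the strong local maximal monotonicity of $F$ to deduce $\gph S \cap U \times V = \gph F \cap U \times V$, which transports back via $\Psi$ to $\gph T \cap U' \times V' = \gph(F - \kappa\text{Id}) \cap U' \times V'$.

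For the ``if'' direction, I would pick $U \times V$ with $\Psi(U \times V) \subset U' \times V'$ so that strong monotonicity of $F$ on $U \times V$ follows directly from monotonicity of $F - \kappa\text{Id}$ on $U' \times V'$ via the identity. The main obstacle I anticipate is the maximality part: given a globally monotone $S$ with $\gph F \cap U \times V \subset \gph S$, the shifted operator $S - \kappa\text{Id}$ is only $\kappa$-hypomonotone and not monotone, so maximality of $F - \kappa\text{Id}$ cannot be invoked on it directly. My plan is to use the equivalent ``no external monotone point'' reformulation of local maximal monotonicity, obtained through Zorn's lemma: local maximal monotonicity of $F - \kappa\text{Id}$ on $U' \times V'$ means that any $(x_0, z_0) \in U' \times V'$ with $\langle z_0 - z, x_0 - x\rangle \geq 0$ for all $(x, z) \in \gph(F - \kappa\text{Id}) \cap U' \times V'$ must lie in $\gph(F - \kappa\text{Id})$. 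A hypothetical external point $(x_0, y_0) \in \gph S \cap U \times V \setminus \gph F$ then yields, after shifting to $(x_0, y_0 - \kappa x_0)$, a candidate whose alleged ``no-external'' inequality must be deduced from the monotonicity of $S$ together with the $\kappa$-strong monotonicity of $F$ on $U \times V$. The delicate step is bridging the discrepancy between the $\geq 0$ bound provided by monotonicity of $S$ and the $\geq \kappa\|x_0-x\|^2$ bound required for the shifted problem; I expect this to be handled by a combination of the closed-graph property of maximal monotone operators and a limiting argument along points of $\gph F$ approaching $(x_0,y_0)$ provided by the maximality of $F - \kappa\text{Id}$.
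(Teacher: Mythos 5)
Your proposal has genuine gaps in both directions, and they are precisely the points where the paper declines to argue directly and instead cites \cite[Proposition 6.7 and Corollary 8.9]{MR4769808}. In the ``only if'' direction, the neighborhood bookkeeping is not a matter of ``tuning radii'': the two inclusions you want, $\Psi^{-1}(U'\times V')\subset U\times V$ and $\Psi(U\times V)\subset U'\times V'$, cannot hold simultaneously for product neighborhoods, since together they force $U'\times V'=\Psi(U\times V)$, which is a sheared set, not a product, when $\kappa>0$. With only the first inclusion, the operator $S:=T+\kappa\mathrm{Id}$ contains merely $\gph F\cap\Psi^{-1}(U'\times V')$, not $\gph F\cap U\times V$, so the maximality hypothesis on $F$ (which tests only those monotone $S$ containing \emph{all} of $\gph F\cap U\times V$) cannot be invoked; with only the second, the equality you pull back holds on $\Psi(U\times V)$ but not on all of $U'\times V'$, where $\gph T$ is uncontrolled. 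Repairing this requires knowing that the local maximality property survives shrinking of the reference neighborhood, which is itself a nontrivial fact (it is essentially equivalent to the existence of a globally maximal monotone operator agreeing with $F$ near the point, cf.\ \cite[Theorem 8.5]{MR4769808}), so your argument is circular at this step rather than elementary.

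In the ``if'' direction you correctly identify the real obstruction --- a plain monotone test operator $S$ for $F$ shifts to a merely hypomonotone $S-\kappa\mathrm{Id}$, so the maximality of $F-\kappa\mathrm{Id}$ cannot be applied --- but what you offer in its place is a hope, not a proof: local maximal monotonicity of $F-\kappa\mathrm{Id}$ does not by itself produce ``points of $\gph F$ approaching $(x_0,y_0)$,'' and the pointwise ``no external monotone point'' reformulation you invoke only removes points that are monotonically ($\geq 0$) related to the local graph, whereas your shifted candidate satisfies only the $\geq-\kappa\|x_0-x\|^2$ estimate you yourself flag. The known route is heavier: one passes from local maximal monotonicity of $F-\kappa\mathrm{Id}$ to a globally maximal monotone operator agreeing with it locally, observes that adding $\kappa\mathrm{Id}$ gives a globally maximal, $\kappa$-strongly monotone operator whose inverse is single-valued and $\kappa^{-1}$-Lipschitz by Minty's theorem, and then uses a perturbation argument of the type appearing in Proposition~\ref{prop-lips-max-hypo} to show that any monotone extension of $\gph F$ must locally coincide with that Lipschitz localization. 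This machinery is exactly the content of the results the paper cites; without it, both halves of your sketch stop short of a proof.
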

	\begin{proof}
		It follows from \cite[Proposition 6.7 \& Corollary 8.9]{MR4769808}. 
	\end{proof}

	We recall some generalized Lipschitz notions for set-valued mappings (see, e.g., \cite{MR4769808}). 
    \begin{definition}
        Let $F\colon \H\tto \H$ be a set-valued mapping with $(\bar x,\bar y)\in \gph F$. We say
	\begin{enumerate}
		\item [1)] $F$ is \emph{metrically regular} with modulus $\kappa$ at $(\bar x,\bar y)$ if there is a neighborhood $U\times V$ of $(\bar x,\bar y)$ such that
		\begin{equation}\label{met-reg-def}
			d(x;F^{-1}(y))\leq \kappa\cdot d(y;F(x)), \forall (x,y)\in U\times V.
		\end{equation}
		The infimum of $\kappa$ over all the open sets $U\times V$ satisfying \eqref{met-reg-def} is denoted by $\text{reg}(F;(\bar x,\bar y))$ and is called the modulus of metric regularity.
		\item [2)] $F$ is \emph{strongly metrically subregular} at $(\bar x,\bar y)$ if there are $\kappa>0$ and  a neighborhood $U$ of $\bar x$ such that $$ \|x-\bar x\|\leq \kappa\cdot d(\bar y;F(x)), \forall x\in U. $$
		\item [3)] $F$ is \emph{strongly metrically regular} at $(\bar x,\bar y)$ if there is a neighborhood $U\times V$ of $(\bar x,\bar y)$ joint with a Lipschitz function $\vartheta\colon V\to U$ such that $V\times U\cap \gph F^{-1} = \gph \vartheta$. 
	\end{enumerate}
    \end{definition}
	Given a set $C\subset \H$ and $x\in C$, the (Bouligand) tangent cone and the strict tangent cone (also known as paratingent cone) of $C$ at $x$ are defined (see, e.g., \cite{MR2458436}), respectively, by
	\begin{equation*}
	    \displaystyle T(x;C) := \limsup_{s\searrow 0} \frac{C - x}{s} \text{ and }
		\displaystyle T_\ast(x;C) := \limsup_{\substack{y\to x, y\in C\\ s\searrow 0
        }}\frac{C-y}{s}.
	\end{equation*}
	
    For a set-valued mapping $F\colon \H\tto \H$ and $(x,y)\in \gph F$, for $u\in \H$, we define the graphical derivative of $F$ at $(x,y)$ in the direction $u$ as \begin{equation*}
	    DF(x,y)(u) :=  \{v\in\H : (u,v)\in T((x,y);\gph F)\},
	\end{equation*}
	and the strict graphical derivative of $F$ at $(x,y)$ in the direction $u$ as 
    \begin{equation*}
        D_\ast F(x,y)(u) := \{v\in\H : (u,v)\in T_\ast((x,y);\gph F)\}.
    \end{equation*}
    The following definition corresponds to positive definiteness of set-valued mappings. We adopt the name given in \cite{aragon2024coderivative}.
	\begin{definition}\label{low-def}
		Let us consider $\rho\in\R$. We say that a set-valued mapping $G\colon \H\tto \H$ is \emph{$\rho$-lower-definite} if for all $(u,v)\in \gph G$, $\langle u,v\rangle\geq \rho\|u\|^2$. For a given set-valued mapping $F\colon \H\tto \H$, we say $DF$ is $\rho$-lower-definite on $A\subset\H\times \H$ if for all $(x,y)\in A\cap \gph F$, $DF(x,y)$ is $\rho$-lower-definite. When $A$ is the neighborhood of a given point $(\bar x,\bar y)\in \gph F$, we say $DF$ is $\kappa$-lower-definite around $(\bar x,\bar y)$.
	\end{definition}
	For a given real number $\kappa\in\R$, we define the $\kappa$-shifted operator $\mathsf{T}_\kappa\colon\H\times \H\to \H\times \H$ given by 
    \begin{equation}\label{T_k}
        \mathsf{T}_\kappa(x,y) = (x,y+\kappa x),
    \end{equation}
    which is linear and invertible.
	\begin{lemma}\label{sum-rule-gd}
		Consider a set-valued mapping $F\colon \H\tto \H$  such that $DF$ is $\kappa$-lower-definite for some $\kappa\in\R$ on some set $A\subset \H\times \H$. Then, for all $\gamma\in\R$, $D(F+\gamma\text{Id})$ is $(\kappa+\gamma)$-lower-definite on $\mathsf{T}_\gamma(A)$.
	\end{lemma}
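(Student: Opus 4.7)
The plan is to reduce the statement to a direct computation, relying on the fact that the map $\mathsf{T}_\gamma$ is a linear homeomorphism on $\H\times\H$ that carries $\gph F$ onto $\gph(F+\gamma\text{Id})$. First I would record the elementary identity $\gph(F+\gamma\text{Id})=\mathsf{T}_\gamma(\gph F)$, which is immediate from the definition of the $\kappa$-shifted operator in \eqref{T_k} and the sum of set-valued mappings.

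Next, using that $\mathsf{T}_\gamma$ and $\mathsf{T}_\gamma^{-1}=\mathsf{T}_{-\gamma}$ are continuous linear bijections, I would invoke the standard property of tangent cones under such maps, namely
\[
T\bigl(\mathsf{T}_\gamma(x,y);\mathsf{T}_\gamma(\gph F)\bigr)=\mathsf{T}_\gamma\bigl(T((x,y);\gph F)\bigr),
\]
which follows directly from the $\limsup$ definition since scaling differences $(\gph F-(x,y))/s$ by the linear map $\mathsf{T}_\gamma$ commutes with taking the outer limit. Translating this to graphical derivatives yields the key identity
\[
D(F+\gamma\text{Id})\bigl(x,y+\gamma x\bigr)(u)=DF(x,y)(u)+\gamma u
\qquad\text{for all }(x,y)\in\gph F,\ u\in\H.
\]

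Finally I would conclude by direct computation. Take $(x',y')\in \mathsf{T}_\gamma(A)\cap\gph(F+\gamma\text{Id})$, so that $(x',y')=(x,y+\gamma x)$ for a unique $(x,y)\in A\cap\gph F$. For any $v'\in D(F+\gamma\text{Id})(x',y')(u)$, the previous identity gives $v'=v+\gamma u$ for some $v\in DF(x,y)(u)$. The hypothesis on $A$ yields $\langle u,v\rangle\geq\kappa\|u\|^2$, and hence
\[
\langle u,v'\rangle=\langle u,v\rangle+\gamma\|u\|^2\geq(\kappa+\gamma)\|u\|^2,
\]
which is exactly the $(\kappa+\gamma)$-lower-definiteness of $D(F+\gamma\text{Id})$ at $(x',y')$.

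The only nontrivial point is the commutation of the tangent cone with the linear homeomorphism $\mathsf{T}_\gamma$; however, this is a well-known fact (and can be verified in one line from the $\limsup$ definition together with continuity of $\mathsf{T}_\gamma^{\pm 1}$), so I expect no real obstacle in carrying out the argument.
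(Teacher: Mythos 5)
Your proposal is correct and follows essentially the same route as the paper: the paper's proof also rests on the identity $D(F+\gamma\text{Id})(x,y)=DF(x,y-\gamma x)+\gamma\text{Id}$ (stated there as ``not difficult to see'') followed by the same one-line inner-product computation. The only difference is that you explicitly justify that identity via the invariance of the tangent cone under the linear homeomorphism $\mathsf{T}_\gamma$, which is a valid and welcome filling-in of the detail the paper leaves implicit.
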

	\begin{proof}
		 It is not difficult to see that for every $(x,y)\in \mathsf{T}_\gamma(A)\cap \gph (F+\gamma\text{Id})$
		\begin{equation*}
			D(F+\gamma\text{Id})(x,y) = DF(x,y-\gamma x) + \gamma\text{Id}.
		\end{equation*}
		Then, if $v\in D(F+\gamma\text{Id})(x,y)(u)$ it follows that $v-\gamma u\in DF(x,y-\gamma x)(u)$. Since $(x,y-\gamma x)\in A$, we have $DF(x,y-\gamma x)$ is $\kappa$-lower-definite, then $\langle v-\gamma u,u\rangle\geq \kappa\|u\|^2$ and it yields $\langle u,v\rangle\geq (\kappa + \gamma)\|u\|^2$, obtaining the desired conclusion.
	\end{proof}
    
	The following lemma states a chain rule for graphical derivative, between a locally Lipschitz mapping and an absolutely continuous function.
	\begin{lemma}\label{lemma_cotingent_lips}
		Let $U\subset \H$ be an open set, $F\colon U\tto \H$ be a set-valued mapping, which is locally Lipschitz on its domain and $x\colon [0,1]\to U$ be an absolutely continuous function such that $x([0,1])\subset \dom F$. Then, we have that for almost all $t\in [0,1]$  $$DF(x(t))(\dot{x}(t)) = \left\{ \frac{d}{dt}(F\circ x)(t) \right\}.$$
	\end{lemma}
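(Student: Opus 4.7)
The strategy is to upgrade the Lipschitz bound on $F$ to Lipschitz continuity of the composition $F\circ x$, deduce a.e.\ differentiability from Proposition~\ref{abs-con-car}, and then verify both inclusions pointwise at times where both $x$ and $F\circ x$ are classically differentiable (which is a full-measure set).

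\textbf{Step 1 (absolute continuity of $F\circ x$).} Since $x$ is continuous and $[0,1]$ compact, $K:=x([0,1])$ is a compact subset of $\dom F$. By covering $K$ by finitely many balls on which $F$ is Lipschitz (and, if necessary, by shrinking them so that consecutive balls overlap along a connected chain), I extract a uniform constant $L>0$ such that $\|F(a)-F(b)\|\le L\|a-b\|$ whenever $a,b$ lie close enough to $K$. Then
\[
\sum_j\|F(x(b_j))-F(x(a_j))\|\le L\sum_j\|x(b_j)-x(a_j)\|,
\]
so $F\circ x$ is absolutely continuous and Proposition~\ref{abs-con-car} gives its a.e.\ differentiability with $L^1$ derivative.

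\textbf{Step 2 (the inclusion $\{(F\circ x)'(t)\}\subset DF(x(t))(\dot x(t))$).} Fix a $t$ at which both derivatives exist. Taking any sequence $s_n\searrow 0$ and setting $u_n:=(x(t+s_n)-x(t))/s_n$, $v_n:=(F(x(t+s_n))-F(x(t)))/s_n$, one has $(u_n,v_n)\to(\dot x(t),(F\circ x)'(t))$ and $(x(t)+s_nu_n,F(x(t))+s_nv_n)\in\gph F$, so $(\dot x(t),(F\circ x)'(t))\in T((x(t),F(x(t)));\gph F)$, i.e.\ $(F\circ x)'(t)\in DF(x(t))(\dot x(t))$.

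\textbf{Step 3 (the reverse inclusion).} Take $v\in DF(x(t))(\dot x(t))$. By the $\limsup$ definition of the tangent cone, there exist $s_n\searrow 0$ and $(u_n,v_n)\to(\dot x(t),v)$ with $F(x(t)+s_nu_n)=F(x(t))+s_nv_n$. The key Lipschitz comparison between the graphical-derivative point and the forward-difference point reads
\[
\|F(x(t)+s_nu_n)-F(x(t+s_n))\|\le L\,\|s_nu_n-(x(t+s_n)-x(t))\|=s_nL\Bigl\|u_n-\tfrac{x(t+s_n)-x(t)}{s_n}\Bigr\|,
\]
which is $o(s_n)$ because both quantities inside the norm converge to $\dot x(t)$. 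Rewriting $s_nv_n=F(x(t)+s_nu_n)-F(x(t))=F(x(t+s_n))-F(x(t))+o(s_n)$, dividing by $s_n$ and letting $n\to\infty$ forces $v=(F\circ x)'(t)$. Combined with Step~2 this yields the claimed equality.

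\textbf{Main difficulty.} The subtle part is Step~3: one must synchronize the graphical-derivative sequence $(s_n,u_n,v_n)$ with the differentiation quotients of $x$ and $F\circ x$ along the \emph{same} parameter $s_n$, so that the Lipschitz estimate produces an honest $o(s_n)$ term rather than a mere $O(s_n)$ bound. The Lipschitz hypothesis on $F$ is exactly what converts the convergence $u_n\to\dot x(t)$ into uniqueness of the graphical-derivative value.
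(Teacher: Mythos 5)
Your proof is correct in substance, but note that the paper does not actually prove this lemma: its ``proof'' is the single citation \cite[Theorem 5.3.2]{MR2458436}, so your self-contained argument is necessarily a different route. The route you choose is the natural direct one: a uniform Lipschitz constant near the compact set $x([0,1])$ makes $F\circ x$ absolutely continuous, Proposition~\ref{abs-con-car} gives a.e.\ differentiability of both $x$ and $F\circ x$, the forward difference quotients yield $(F\circ x)'(t)\in DF(x(t))(\dot x(t))$, and the Lipschitz comparison between $x(t)+s_nu_n$ and $x(t+s_n)$ forces every element of $DF(x(t))(\dot x(t))$ to equal $(F\circ x)'(t)$. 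Step 3 is sound precisely because at a point of differentiability the quotients of $x$ and of $F\circ x$ converge along \emph{every} sequence $s_n\searrow 0$, so the sequence supplied by the tangent-cone definition needs no further synchronization; the error term is indeed $o(s_n)$, not just $O(s_n)$. What your proof buys over the paper's citation is transparency about exactly which hypotheses enter (local Lipschitzness for the uniqueness part, absolute continuity plus Proposition~\ref{abs-con-car} for existence a.e.); what the citation buys is brevity.

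Two caveats you should make explicit. First, you silently treat $F$ as single-valued on its domain: you write $F(a)-F(b)$, $(F\circ x)'(t)$ and $F(x(t)+s_nu_n)=F(x(t))+s_nv_n$. This is the only reading under which the stated equality can hold (for a genuinely multivalued Lipschitz map such as $F\equiv[0,1]$ on $\R$ the left-hand side is not a singleton), it is what the notation $DF(x(t))$ without a second argument presupposes, and it matches the lemma's sole use in the paper, namely for the single-valued Lipschitz localization $\vartheta$ in Lemma~\ref{lem-met-reg}; still, state it rather than assume it tacitly. Second, in Step 1 the ``chain of overlapping balls'' phrase is better replaced by a Lebesgue-number argument: a finite subcover gives one constant $L$ and a radius $\delta>0$ such that the Lipschitz estimate holds for any two points near $K$ at mutual distance less than $\delta$, and uniform continuity of $x$ ensures that sufficiently fine disjoint families $(a_j,b_j)$ produce only such pairs (the same remark covers the pair $x(t)+s_nu_n$, $x(t+s_n)$ in Step 3 for large $n$; the endpoint $t=1$, where forward quotients are unavailable, is a null set). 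These are presentational, not mathematical, gaps.
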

	\begin{proof}
		See \cite[Theorem 5.3.2]{MR2458436}.    
	\end{proof}

    We recall the following result concerning strong metric subregularity, established under the assumption of the lower-definiteness of the graphical derivative.
	\begin{lemma}\label{st-met-sub-lemma}
		Suppose that $\H$ is finite-dimensional. Let us consider $F\colon \H\tto \H$ to be a set-valued mapping with $(\bar x,\bar y)\in \gph F$ such that $F$ has locally closed graph at $(\bar x,\bar y)$ and $DF(\bar x,\bar y)$ is $\rho$-lower-definite. Then $F$ is strongly metrically subregular at $(\bar x,\bar y)$. In particular, $\bar x$ is an isolated point in $F^{-1}(\bar y)$.
	\end{lemma}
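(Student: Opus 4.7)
The plan is to argue by contradiction using the standard finite-dimensional blow-up argument that is classical for deriving strong metric subregularity from a pointwise tangential condition. The key observation is that $\rho$-lower-definiteness (taken with $\rho>0$, as implicit from the conclusion) forces $DF(\bar x,\bar y)^{-1}(0)=\{0\}$, since any $u$ with $0\in DF(\bar x,\bar y)(u)$ must satisfy $0=\langle u,0\rangle\geq \rho\|u\|^2$, whence $u=0$.

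Concretely, I would suppose for contradiction that $F$ fails to be strongly metrically subregular at $(\bar x,\bar y)$. Negating the definition, for each $n\in\N$ I can choose $x_n\in \B_{1/n}(\bar x)$ with $x_n\neq\bar x$ and
\begin{equation*}
d(\bar y; F(x_n))<\tfrac{1}{n}\|x_n-\bar x\|.
\end{equation*}
In particular $F(x_n)\neq \emptyset$ for large $n$, so I can pick $y_n\in F(x_n)$ with $\|y_n-\bar y\|\leq \tfrac{2}{n}\|x_n-\bar x\|$. Because $\gph F$ is locally closed near $(\bar x,\bar y)$, the points $(x_n,y_n)$ lie in $\gph F$ near the base point for all large $n$, and we have $(x_n,y_n)\to(\bar x,\bar y)$.

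Next I would perform the blow-up. Set $t_n:=\|x_n-\bar x\|\searrow 0$, $u_n:=(x_n-\bar x)/t_n$, and $v_n:=(y_n-\bar y)/t_n$. By construction $\|u_n\|=1$ and $\|v_n\|\leq 2/n\to 0$. Exploiting that $\H$ is finite-dimensional, the unit sphere is compact, so along a subsequence $u_n\to u$ with $\|u\|=1$, while $v_n\to 0$. Since $(x_n,y_n)=(\bar x,\bar y)+t_n(u_n,v_n)\in \gph F$ with $t_n\searrow 0$, the very definition of the Bouligand tangent cone yields $(u,0)\in T((\bar x,\bar y);\gph F)$, i.e., $0\in DF(\bar x,\bar y)(u)$ with $u\neq 0$. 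This contradicts $\rho$-lower-definiteness, proving strong metric subregularity.

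For the last assertion, if $x\in F^{-1}(\bar y)\cap U$ where $U$ is the subregularity neighborhood, then $d(\bar y;F(x))=0$, so the subregularity inequality forces $x=\bar x$, showing that $\bar x$ is isolated in $F^{-1}(\bar y)$. The main subtlety is the extraction of the limit direction, which is precisely where finite-dimensionality enters in an essential way; in infinite dimensions the unit sphere is not compact, so one would need an extra assumption (e.g., passing to the strict graphical derivative or weak limits) to recover the tangential vector $u\neq 0$.
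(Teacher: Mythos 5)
Your argument is correct. The paper disposes of this lemma in one line by citing the graphical derivative criterion for strong metric subregularity (Theorem 4E.1 in Dontchev--Rockafellar): under local closedness of $\gph F$ in finite dimensions, strong metric subregularity at $(\bar x,\bar y)$ is equivalent to $DF(\bar x,\bar y)^{-1}(0)=\{0\}$, and $\rho$-lower-definiteness (with $\rho>0$) immediately kills the kernel, exactly as in your opening observation. What you have written is, in effect, a self-contained proof of the sufficiency half of that cited criterion: the negation of subregularity, the choice of $y_n$ within twice the distance $d(\bar y;F(x_n))$, the blow-up $(u_n,v_n)=((x_n-\bar x)/t_n,(y_n-\bar y)/t_n)$, and the compactness of the unit sphere to produce $0\in DF(\bar x,\bar y)(u)$ with $\|u\|=1$, contradicting lower-definiteness. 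So the route is the standard one underlying the paper's citation, just spelled out; what it buys is independence from the reference and transparency about where finite-dimensionality enters, which you correctly identify. Two side remarks: your proof never actually uses the local closedness of $\gph F$ (the points $(x_n,y_n)$ are in $\gph F$ by construction and the Bouligand tangent cone needs no closedness), so that hypothesis is only inherited from the cited theorem; and your reading that $\rho>0$ is implicit is right --- for $\rho\le 0$ the statement fails (e.g.\ $F\equiv\{0\}$ is $0$-lower-definite but not strongly metrically subregular), and the paper only invokes the lemma with positive modulus.
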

	\begin{proof}
		It is a direct consequence of \cite[Theorem 4E.1]{MR3288139}.
	\end{proof}
    
    The next lemma states the lower-definiteness of the strict graphical derivative derived from the local monotonicity.
	\begin{lemma}\label{slm-low-def}
		Consider a set-valued mapping $F\colon \H\tto \H$ and $(\bar x,\bar y)\in \gph F$. Suppose that $F-\kappa\text{Id}$ is locally monotone at $(\bar x,\bar y-\kappa\bar x)$ for some $\kappa\in\R$. Then, $(x,y)\tto D_\ast F(x,y)$ is $\kappa$-lower-definite around $(\bar x,\bar y)$.
	\end{lemma}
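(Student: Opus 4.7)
The plan is to unfold the definition of the strict graphical derivative and then apply the local monotonicity of $F - \kappa\text{Id}$ to two nearby pairs in $\gph F$, passing to the limit.

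First, translate the hypothesis: the local monotonicity of $F - \kappa\text{Id}$ at $(\bar x,\bar y - \kappa\bar x)$ gives neighborhoods $U$ of $\bar x$ and $V$ of $\bar y - \kappa \bar x$ such that for every pair of graph points $(x_i,y_i)\in \gph F$ with $(x_i,y_i-\kappa x_i)\in U\times V$ (for $i=1,2$), one has
\[
\langle y_1 - y_2, x_1 - x_2\rangle \;\geq\; \kappa\|x_1-x_2\|^2.
\]
Since the map $(x,y)\mapsto (x,y-\kappa x)$ is continuous, I can pick a small neighborhood $W$ of $(\bar x,\bar y)$ whose image under this map is contained in $U\times V$; this will be the neighborhood where I verify the lower-definiteness.

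Next, fix an arbitrary $(x,y)\in W\cap \gph F$ and take $v\in D_\ast F(x,y)(u)$. By definition of the paratingent cone, there exist sequences $(x_n,y_n)\to (x,y)$ with $(x_n,y_n)\in\gph F$, scalars $s_n\searrow 0$ and vectors $(u_n,v_n)\to (u,v)$ such that $(x_n+s_nu_n,\, y_n+s_nv_n)\in\gph F$ for every $n$. Because $(x,y)\in W$ and both sequences $(x_n,y_n)$ and $(x_n+s_nu_n, y_n+s_nv_n)$ converge to $(x,y)$, for $n$ sufficiently large both pairs lie in $W$, hence both preimages under the shift lie in $U\times V$.

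Applying the monotonicity inequality to these two graph points yields
\[
\langle (y_n+s_nv_n) - y_n,\, (x_n+s_nu_n)-x_n\rangle \geq \kappa\,\|(x_n+s_nu_n)-x_n\|^2,
\]
which simplifies to $s_n^2\langle v_n,u_n\rangle \geq \kappa s_n^2\|u_n\|^2$, i.e., $\langle v_n,u_n\rangle\geq \kappa\|u_n\|^2$. Dividing by $s_n^2>0$ and taking the limit $n\to\infty$ gives $\langle v,u\rangle\geq \kappa\|u\|^2$, which is exactly the $\kappa$-lower-definiteness of $D_\ast F(x,y)$. Since $(x,y)$ was arbitrary in $W\cap\gph F$, this concludes the proof. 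The only subtle point is the neighborhood bookkeeping, namely arranging that $W$ maps into $U\times V$ under the shift, but this is routine since $\mathsf{T}_{-\kappa}$ is continuous.
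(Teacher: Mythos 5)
Your proof is correct and follows essentially the same route as the paper: translate the local monotonicity of $F-\kappa\text{Id}$ into the strong-monotonicity inequality on a shifted neighborhood (the paper uses $\mathsf{T}_\kappa(U\times V)$ where you use your $W$), unfold the paratingent-cone definition of $D_\ast F$, apply the inequality to the two nearby graph points, cancel $s_n^2$, and pass to the limit. No gaps worth noting.
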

	\begin{proof}
		We have there is a neighborhood $U\times V$ of $(\bar x , \bar y - \kappa\bar x)$ such that \eqref{hypo01} is satisfied on $\gph (F-\kappa\text{Id})\cap (U\times V)$. For $(x,y)\in \gph F\cap \mathsf{T}_{\kappa}(U\times V)$, taking any $d\in D_\ast F(x ,y)(z)$ for $z\in \H$, then there are sequences $(s_k)\subset \R_+$, $(z_k,d_k)$ and $(x_k,y_k)\in \gph F$ such that $s_k\searrow 0$, $(z_k,d_k)\to (z,d)$, $(x_k,y_k)\to (x,y)$ and $(x_k ,y_k) + s_k(z_k,d_k)\in \gph F$. Then, for all $k\in\N$ big enough, we have
		\begin{equation*}
			\langle (y_k + s_k d_k) - y_k,(x_k + s_k z_k) -x_k \rangle \geq \kappa\|x_k + s_k z_k - x_k\|^2.
		\end{equation*}
		It follows that $\langle d_k,z_k\rangle\geq \kappa\|z_k\|^2$, then taking the limit when $k\to \infty$, we get $\langle d,z\rangle\geq \kappa\|z\|^2$, so we conclude that $D_\ast F(x,y)$ is $\kappa$-lower-definite.
	\end{proof}
	
	An important example of locally maximal hypomonotone operator is given in the following proposition.
	\begin{proposition}\label{prop-lips-max-hypo}
		Consider a set-valued mapping $F\colon \H\tto \H$ with $(\bar x,\bar y)\in \gph F$. Suppose that $F^{-1}$ is strongly metrically regular at $(\bar y,\bar x)$. Then, $F$ is locally maximal hypomonotone.
	\end{proposition}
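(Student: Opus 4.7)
The plan is to exploit the strong metric regularity of $F^{-1}$ to locally represent $F$ as a Lipschitz single-valued mapping, and then to show that a suitable shift of $F$ becomes a continuous, strongly monotone function for which local maximal monotonicity follows from a standard continuity argument.

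First, I would unfold the definition of strong metric regularity of $F^{-1}$ at $(\bar y,\bar x)$ to obtain neighborhoods $U$ of $\bar x$ and $V$ of $\bar y$ together with a Lipschitz map $\vartheta\colon U\to V$ of some modulus $L>0$ such that $\gph F\cap (U\times V)=\gph\vartheta$. In particular, $F$ locally coincides with the Lipschitz single-valued mapping $\vartheta$.

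Next, choosing any $\rho > L$ and setting $G:=\vartheta+\rho\,\text{Id}$, the Cauchy--Schwarz inequality combined with the Lipschitz property of $\vartheta$ yields
\begin{equation*}
\langle G(x_1)-G(x_2),x_1-x_2\rangle \geq (\rho-L)\|x_1-x_2\|^2,\quad \forall x_1,x_2\in U,
\end{equation*}
so $F+\rho\,\text{Id}$ is (strongly) locally monotone around $(\bar x,\bar y+\rho\bar x)$. For the maximality part, I would take any globally monotone $S\colon \H\tto \H$ whose graph contains $\gph(F+\rho\,\text{Id})\cap (U'\times V')$ for a neighborhood $U'\times V'$ of $(\bar x,\bar y+\rho\bar x)$. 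Shrinking the neighborhoods so that $G$ remains continuous, single-valued and with $G(x)\in V'$ for every $x$ close to $\bar x$, I would pick $(x,w)\in \gph S$ near $(\bar x,\bar y+\rho\bar x)$. For any unit vector $u\in\H$ and sufficiently small $t>0$, the pair $(x+tu,G(x+tu))$ belongs to $\gph G\cap (U'\times V')\subset \gph S$, so monotonicity of $S$ gives $\langle w-G(x+tu),u\rangle\leq 0$. Passing to the limit $t\to 0^+$ via continuity of $G$ then yields $\langle w-G(x),u\rangle\leq 0$ for every $u$, whence $w=G(x)$. This shows that $F+\rho\,\text{Id}$ is locally maximal monotone, i.e., that $F$ is locally maximal hypomonotone.

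The main delicate point I anticipate is the simultaneous handling of the nested neighborhoods: one has to shrink $U'\times V'$ so that the perturbation $(x+tu,G(x+tu))$ stays inside it for all small $t$, which is what allows the monotonicity comparison with $S$ to be invoked at the perturbed points.
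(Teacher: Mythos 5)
Your proposal is correct and follows essentially the same route as the paper: use strong metric regularity of $F^{-1}$ to write $\gph F$ locally as the graph of a Lipschitz function $\vartheta$, shift by a multiple of its Lipschitz constant to obtain (local) monotonicity, and then prove maximality by comparing any monotone extension $S$ with the points $(x+tu,\vartheta(x+tu)+\rho(x+tu))$ and letting $t\searrow 0$ via continuity of $\vartheta$. The only cosmetic differences are that the paper takes the shift exactly equal to the Lipschitz constant (plain monotonicity suffices) and works on the sheared neighborhood $\mathsf{T}_\sigma(U\times V)$ instead of shrinking a product neighborhood, which you handle correctly by requiring $G(U')\subset V'$.
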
 
	\begin{proof}
		Since $F^{-1}$ is strongly metrically regular at $(\bar y,\bar x)$, we have there is a neighborhood $U\times V$ of $(\bar x,\bar y)$ and a Lipschitz function $\vartheta\colon U\to V$ such that $\gph F\cap U\times V = \gph \vartheta$. Let $\sigma>0$ be the Lipschitz constant of $\vartheta$, then for all $x,y\in U$, 
		\begin{equation*}\label{eqn_lips_hypo}
			\langle \vartheta(y)-\vartheta(x),x-y\rangle\leq \| \vartheta(x)-\vartheta(y)\|\cdot \|x-y\|\leq \sigma\|x-y\|^2.
		\end{equation*}
		It follows directly that $F+\sigma\text{Id}$ is locally monotone at $(\bar x,\bar y + \sigma \bar x)$. Regarding \eqref{T_k}, consider $W := \mathsf{T}_{\sigma}(U\times V)$, which is an open neighborhood of $(\bar x,\bar y + \sigma \bar x)$. Let us take a monotone operator $S\colon \H\tto \H$ such that $W\cap \gph (F+\sigma\text{Id})\subset \gph S$. Then, consider $(x,y)\in W\cap \gph S$, we have $(x,y-\sigma x)\in U\times V$. Since $x\in U$, there is $\delta>0$ such that $\mathbb{B}_\delta(x)\subset U$, then for all $v\in \mathbb{B}$ and $\epsilon\in [0,\delta[$, $x+\epsilon v\in U$. Note that for all $z\in U$, $(z,\vartheta(z)+\sigma z)\in W\cap \gph S$, and given that $S$ is monotone, for all $v\in \mathbb{B}$ and $\epsilon\in ]0,\delta]$
		\begin{equation*}
			\langle x+\epsilon v-x,\vartheta(x+\epsilon v)+\sigma (x+\epsilon v) -y \rangle\geq 0.
		\end{equation*}
		It follows that
		\begin{equation*}
			\langle v,\vartheta(x+\epsilon v) + \sigma\epsilon v-(y-\sigma x)\rangle\geq 0.
		\end{equation*}
		Then, taking $\epsilon\searrow 0$, by virtue of the continuity of $\vartheta$, we have $$\langle v,\vartheta(x)-(y-\sigma x)\rangle\geq 0, \ \forall v\in \mathbb{B},$$ and it implies that $\vartheta(x) = y-\sigma x$, so $(x,y-\sigma x)\in \gph F$, thus $(x,y)\in \gph(F+\sigma\text{Id})$. It follows that $W\cap \gph S = W\cap \gph (F+\sigma\text{Id})$, then $F+\sigma \text{Id}$ is locally maximal monotone at $(\bar x,\bar y+\sigma \bar x)$, concluding the desired result.
	\end{proof}
    
	We also provide a local closedness of the graph of a local maximal hypomonotone operator.
	\begin{lemma}\label{local-closed}
		Let $F\colon \H\tto\H$ be a set-valued mapping and $(\bar x,\bar y)\in \gph F$. Suppose that $F$ is locally maximal hypomonotone at $(\bar x,\bar y)$, then $\gph F$ is locally closed at $(\bar x,\bar y)$.
	\end{lemma}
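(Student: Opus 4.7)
The plan is to combine two classical ingredients: every monotone relation in $\H\times\H$ extends to a (globally) maximal monotone operator via Zorn's lemma, and graphs of maximal monotone operators on Hilbert spaces are closed. The only preparatory step is to strip away the hypomonotonicity shift.

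First I would use the hypothesis to fix $\rho>0$ such that $F+\rho\text{Id}$ is locally maximal monotone at $(\bar x,\bar y+\rho\bar x)$. Since the shift $\mathsf{T}_\rho$ defined in \eqref{T_k} is a linear homeomorphism of $\H\times\H$ satisfying $\mathsf{T}_\rho(\gph F)=\gph(F+\rho\text{Id})$, local closedness of the two graphs at the corresponding points is equivalent. This reduces the claim to the case where $F$ itself is locally maximal monotone at $(\bar x,\bar y)$.

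In that setting, let $U\times V$ be the neighborhood supplied by the definition of local maximal monotonicity. The set $\gph F\cap (U\times V)$ is a monotone subset of $\H\times\H$ in the ordinary global sense, so a standard Zorn's lemma argument produces a globally maximal monotone operator $S\colon \H\tto\H$ with $\gph F\cap (U\times V)\subset \gph S$. The local maximality hypothesis applied to this globally monotone $S$ then yields $\gph F\cap (U\times V) = \gph S\cap (U\times V)$. Choosing $r>0$ small enough that $\mathbb{B}_r[\bar x]\times \mathbb{B}_r[\bar y]\subset U\times V$, and invoking the classical fact that the graph of a maximal monotone operator on a Hilbert space is closed in $\H\times\H$, one obtains $\gph F \cap (\mathbb{B}_r[\bar x]\times \mathbb{B}_r[\bar y]) = \gph S \cap (\mathbb{B}_r[\bar x]\times \mathbb{B}_r[\bar y])$ as the intersection of closed sets, hence closed.

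I do not anticipate a serious obstacle: the only delicate point is bridging a purely local hypothesis with the globally formulated closedness of maximal monotone graphs, but this is precisely what the quantification over every global monotone extension in the definition of local maximal monotonicity is engineered to accommodate.
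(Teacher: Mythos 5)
Your argument is correct and is essentially the paper's proof: both extend the local monotone piece to a globally maximal monotone operator (Zorn's lemma, i.e.\ the standard extension theorem), invoke local maximality to identify the graphs on the neighborhood, use closedness of maximal monotone graphs, and transfer back through the linear homeomorphism $\mathsf{T}_\rho$. The only cosmetic difference is that you shift first and work with the monotone case, while the paper works with $F+\kappa\text{Id}$ directly and pulls the closed graph back via $\mathsf{T}_\kappa^{-1}$ at the end.
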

	
	\begin{proof}
		Let us consider $\kappa>0$, and a neighborhood $U\times V$ of $(\bar x,\bar y + \kappa \bar x)$ such that $B := \gph (F+\kappa\text{Id})\cap U\times V$ is monotone, and we also consider a maximal monotone extension of $B$, denoted by $G$ (see \cite[Theorem 20.21]{MR3616647}). The local maximality of $F$ says us $$\gph (F+\kappa\text{Id})\cap U\times V = \gph G\cap U\times V.$$ Since $G$ is maximal monotone (globally), we have $\gph G$ is closed (see \cite[Proposition 20.38 (iii)]{MR3616647}). Taking $W := \mathsf{T}_{\kappa}^{-1}(U\times V)$, which is a neighborhood of $(\bar x,\bar y)$, we have $$\gph F\cap W = \mathsf{T}_{\kappa}^{-1}(\gph G)\cap W.$$
		Since $\mathsf{T}_{\kappa}^{-1}(\gph G)$ is closed, it follows $\gph F$ is locally closed at $(\bar x,\bar y)$.
	\end{proof}
	
	\section{Characterization of Local Strong Maximal Monotonicity}
	
	The theorem presented below corresponds to a characterization of strongly local maximal monotonicity through first-order properties related with lower definiteness of graphical derivatives (see Definition \ref{low-def}). This result can be seen as a counterpart criteria through coderivatives (see, e.g., \cite[Theorem 3.4]{MR3485980}). First, we begin with an important lemma, which will be useful later.
	
	\begin{lemma}\label{lem-met-reg}
		Let $F\colon \H\tto \H$ be a set-valued mapping such that  $DF$ is $\kappa$-lower-definite around  $(\bar x,\bar y)\in \gph F$ with $\kappa>0$ and $F$ is strongly metrically regular at $(\bar x,\bar y)$. Then, 
		\begin{enumerate}
			\item [(a)] $F$ is strongly locally monotone at $(\bar x,\bar y)$ with modulus $\kappa$.
			\item [(b)] $\text{reg}(F;(\bar x,\bar y))\leq \frac{1}{\kappa}$.
		\end{enumerate}
	\end{lemma}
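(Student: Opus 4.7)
Strong metric regularity of $F$ at $(\bar x,\bar y)$ provides a neighborhood $U\times V$ and a Lipschitz single-valued local inverse $\vartheta\colon V\to U$ with $V\times U\cap\gph F^{-1}=\gph\vartheta$. The central idea is to feed a linear segment in the $y$-variable into the chain rule of Lemma \ref{lemma_cotingent_lips} applied to $\vartheta$, and to then transport the resulting a.e.\ differential identity back to $DF$ via the coordinate swap between $\gph F$ and $\gph F^{-1}$; the lower-definiteness of $DF$ then yields a pointwise estimate that integrates to the desired strong monotonicity inequality.

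For part (a), I first shrink $V$ to a convex neighborhood $V_0$ of $\bar y$ small enough that $\vartheta(V_0)$ lies inside the neighborhood where $DF$ is $\kappa$-lower-definite. Fix $y_1,y_2\in V_0$ and put $y(t):=(1-t)y_1+ty_2$, $x(t):=\vartheta(y(t))$; since $\vartheta$ is locally Lipschitz, $x(\cdot)$ is absolutely continuous, hence a.e.\ differentiable by Proposition \ref{abs-con-car}. Lemma \ref{lemma_cotingent_lips} yields $\dot x(t)\in D\vartheta(y(t))(\dot y(t))$ a.e., i.e.\ $(\dot y(t),\dot x(t))\in T((y(t),x(t));\gph\vartheta)=T((y(t),x(t));\gph F^{-1})$ locally, and the tangent-cone inverse swap rewrites this as $\dot y(t)\in DF(x(t),y(t))(\dot x(t))$. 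The $\kappa$-lower-definiteness gives $\langle \dot y(t),\dot x(t)\rangle\geq \kappa\|\dot x(t)\|^2$ a.e., and integrating over $[0,1]$ with $\dot y(t)\equiv y_2-y_1$ together with the Cauchy--Schwarz/Jensen bound
$$\|x_2-x_1\|^2=\Big\|\int_0^1\dot x(t)\,dt\Big\|^2\leq \int_0^1\|\dot x(t)\|^2\,dt$$
produces $\langle y_2-y_1,x_2-x_1\rangle\geq \kappa\|x_2-x_1\|^2$. Any $(x_i,y_i)\in\gph F\cap(\vartheta(V_0)\times V_0)$ is of this form, giving strong local monotonicity with modulus $\kappa$.

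For part (b), a single Cauchy--Schwarz application to the inequality from (a) gives $\|\vartheta(y_1)-\vartheta(y_2)\|\leq \frac{1}{\kappa}\|y_1-y_2\|$ on $V_0$. I then fix $\epsilon>0$ and shrink $U\times V$ so that (i) $U\subset \vartheta(V_0)$ (so for every $x\in U$ there is some $y'\in F(x)\cap V_0$ with $\vartheta(y')=x$), and (ii) $V$ is small enough that the tail $F(x)\setminus V_0$ sits uniformly far from $V$, so that $d(y;F(x))=d(y;F(x)\cap V_0)$ for $(x,y)\in U\times V$. For such $(x,y)$ and $y'\in F(x)\cap V_0$,
$$d(x;F^{-1}(y))\leq \|x-\vartheta(y)\|=\|\vartheta(y')-\vartheta(y)\|\leq \tfrac{1}{\kappa}\|y'-y\|,$$
and taking the infimum in $y'$ yields $d(x;F^{-1}(y))\leq \tfrac{1}{\kappa}d(y;F(x))$ on $U\times V$, so $\operatorname{reg}(F;(\bar x,\bar y))\leq \tfrac{1}{\kappa}$.

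The main obstacle I expect is the transfer between $D\vartheta$ and $DF$: one must be careful that the tangent cones of $\gph F^{-1}$ and $\gph\vartheta$ agree at the relevant base points (handled by the local identification $\gph F^{-1}\cap(V\times U)=\gph\vartheta$) and that the coordinate swap converts the a.e.\ chain rule for $\vartheta$ into the desired membership for $DF$. The shrinking argument in (b) is routine but must be executed carefully so that no constant is lost, i.e.\ the estimate $\tfrac{1}{\kappa}$ is preserved in the limit $\epsilon\downarrow 0$.
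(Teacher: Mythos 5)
Your part (a) is essentially the paper's own argument: the same identification of $D\vartheta(y)$ with $DF(\vartheta(y),y)^{-1}$ via the local coincidence of $\gph F^{-1}$ and $\gph\vartheta$, the same application of Lemma \ref{lemma_cotingent_lips} along the segment $t\mapsto (1-t)y_1+ty_2$, and the same integration/Jensen step yielding $\langle \vartheta(y_2)-\vartheta(y_1),y_2-y_1\rangle\geq\kappa\|\vartheta(y_2)-\vartheta(y_1)\|^2$; this part is correct.

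The gap is in part (b), specifically in your shrinking condition (i), ``$U\subset\vartheta(V_0)$''. Strong metric regularity only says that $F^{-1}(y)\cap U=\{\vartheta(y)\}$ for every $y\in V$; it gives no control on $F(x)$ for $x\in U$, and the range $\vartheta(V_0)$ need not contain any neighborhood of $\bar x$, so no amount of shrinking produces (i). For instance, $\gph F=\{\bar x\}\times\H$ satisfies both hypotheses of the lemma (here $\vartheta\equiv\bar x$, and the lower-definiteness holds because every tangent pair has first component $0$), yet $\vartheta(V_0)=\{\bar x\}$, and for $x\in U\setminus\{\bar x\}$ there is no $y'\in F(x)\cap V_0$ at all, so your inequality chain cannot start; likewise your (ii) does not cover the case where $F(x)$ misses $V_0$ entirely (possibly $F(x)=\emptyset$, where $d(y;F(x))=\infty$ by convention). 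The paper sidesteps this: having shown $\vartheta$ is $\tfrac{1}{\kappa}$-Lipschitz, it concludes via the standard fact that for a strongly metrically regular map the regularity modulus is bounded by the Lipschitz modulus of the single-valued localization of the inverse. If you want to verify that fact by hand, the right structure is a case analysis rather than (i)--(ii): fix a smaller ball $U'\times V'$ and a threshold $\delta<d(V',\H\setminus V_0)$; if $d(y;F(x))<\delta$, then any near-minimizing $y'\in F(x)$ lies in $V_0$ automatically, and $x\in F^{-1}(y')\cap U$ forces $x=\vartheta(y')$ (no surjectivity of $\vartheta$ is needed), giving $d(x;F^{-1}(y))\leq\|\vartheta(y')-\vartheta(y)\|\leq\tfrac{1}{\kappa}\|y'-y\|$; if instead $d(y;F(x))\geq\delta$ (including $F(x)=\emptyset$), shrink $U'\times V'$ further so that $\|x-\vartheta(y)\|\leq\delta/\kappa$, and the crude bound $d(x;F^{-1}(y))\leq\|x-\vartheta(y)\|\leq\tfrac{1}{\kappa}d(y;F(x))$ closes the estimate with the same constant $\tfrac{1}{\kappa}$.
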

	
	\begin{proof}
		Since $F$ is strongly metrically regular, there are a neighborhood $U\times V$ of $(\bar x,\bar y)$ and a Lipschitz function $\vartheta \colon V\to U$ where $\gph F^{-1}\cap V\times U = \gph \vartheta$. By shrinking $V$, assume it is open and convex. Also, suppose that $DF(x,y)$ is $\kappa$-lower-definite for all $(x,y)\in U\times V\cap \gph F$.\\
		\emph{{\bf Claim 1}: For every $y\in V$ and $(u,v)\in \gph D\vartheta(y)$ we have $\langle v,u \rangle\geq \kappa\|v\|^2$.}\\
		\emph{Proof of Claim 1:} Take $y\in V$. Observe that, $$DF(\vartheta(y),y)^{-1} = DF^{-1}(y,\vartheta(y)) = D\vartheta(y).$$ 
		Since $DF$ is $\kappa$-lower-definite it follows $\langle u,v\rangle\geq \kappa\|v\|^2$.\\
		\emph{{\bf Claim 2}: $F$ is strongly locally monotone at $(\bar x,\bar y)$ with modulus $\kappa$.}\\
		\emph{Proof of Claim 2:} Take $y_1,y_2\in V$. Define $\alpha\colon t\mapsto ty_2 + (1-t)y_1$, we have that $\alpha([0,1])\subset V$ and in view of the Lipschitz continuity of $\vartheta$, we have $\vartheta\circ \alpha$ is absolutely continuous. By Lemma \ref{lemma_cotingent_lips}, we also have $D\vartheta(\alpha(t))(y_2-y_1) = \{\frac{d}{dt}(\vartheta\circ \alpha)(t)\}$ for a.e. $t\in [0,1]$, thus Claim 1 yields
		\begin{equation*}
			\left\langle \frac{d}{dt}(\vartheta\circ \alpha)(t),y_2-y_1 \right\rangle\geq \kappa\left\|\frac{d}{dt}(\vartheta\circ \alpha)(t)\right\|^2 \text{ for a.e. }t\in [0,1].
		\end{equation*}
		Then,
		\begin{equation*}
			\begin{aligned}
				\kappa\|\vartheta(y_2)-\vartheta(y_1)\|^2& = \kappa \|(\vartheta\circ \alpha)(1) - (\vartheta\circ \alpha)(0)\|^2\\
				&= \kappa \left\|\int_0^1\frac{d}{dt}(\vartheta\circ \alpha)(t)dt \right\|^2\\
				&\leq \kappa\int_0^1\left\|\frac{d}{dt}(\vartheta\circ \alpha)(t)\right\|^2dt\\
				&\leq \int_0^1 \left\langle \frac{d}{dt}(\vartheta\circ \alpha)(t),y_2-y_1 \right\rangle dt\\
				&= \langle  (\vartheta\circ \alpha)(1) - (\vartheta\circ \alpha)(0),y_2-y_1\rangle \\
				&= \langle \vartheta(y_2) - \vartheta(y_1),y_2-y_1\rangle,
			\end{aligned}
		\end{equation*}
		where Proposition \ref{abs-con-car} was used, joint with Hölder inequality. Now, consider $(x_i,y_i)\in \gph F\cap U\times V$ for $i\in \{1,2\}$. It follows that $x_i = \vartheta(y_i)$ for $i\in \{1,2\}$, then by the recently proved inequality, we have $\langle x_1-x_2,y_1-y_2\rangle\geq \kappa\|x_1-x_2\|^2$.\\
		\emph{{\bf Claim 3}: $\text{reg}(F;(\bar x,\bar y))\leq \frac{1}{\kappa}$.}\\
		\emph{Proof of Claim 3:} Take $y_1,y_2\in V$, by the inequality proved in Claim 2 for $\vartheta$ and Cauchy-Schwarz inequality, we have directly $\vartheta$ is $\frac{1}{\kappa}$-Lipschitz on $V$, implying that $\text{reg}(F;(\bar x,\bar y))\leq \frac{1}{\kappa}$.
	\end{proof}
	
        Now, we present our main result. It provides diverse characterizations describing local maximal strong monotonicity through graphical derivatives and some additional hypotheses. 
	\begin{theorem}\label{prop-s-m-r}
		Let $F\colon \H\tto\H$ be a set-valued mapping, and $(\bar x,\bar y)\in \gph F$. Suppose that $\kappa>0$. The following assertions are equivalent
		\begin{enumerate}
			\item [(a)] $F$ is strongly locally maximal monotone at $(\bar x,\bar y)$ with modulus $\kappa$.
			\item [(b)] $DF$ is $\kappa$-lower-definite around $(\bar x,\bar y)$ and $F$ is locally maximal hypomonotone at $(\bar x,\bar y)$.
			\item [(c)] $DF$ is $\kappa$-lower-definite around $(\bar x,\bar y)$ and $F$ is strongly metrically regular at $(\bar x,\bar y)$.
		\end{enumerate}
		Moreover, when $\H$ is finite dimensional, all the assertions are equivalent to
		\begin{enumerate}
			\item [(d)] $D_\ast F$ is $\kappa$-lower-definite around $(\bar x,\bar y)$, $\gph F$ is locally closed at $(\bar x,\bar y)$ and $\bar x\in \liminf_{y\to\bar y}F^{-1}(y)$.
		\end{enumerate}
	\end{theorem}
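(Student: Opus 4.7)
My plan is to establish the circle (a)$\Rightarrow$(b)$\Rightarrow$(c)$\Rightarrow$(a) in the general Hilbert setting, and then handle the finite-dimensional equivalence with (d) separately. For (a)$\Rightarrow$(b), Proposition~\ref{prop-slmm-lmm} recasts (a) as ``$F-\kappa\,\text{Id}$ is locally maximal monotone at $(\bar x,\bar y-\kappa\bar x)$''. Lemma~\ref{slm-low-def} then delivers $\kappa$-lower-definiteness of $D_\ast F$ around $(\bar x,\bar y)$, which passes to $DF$ since $T\subset T_\ast$. Local maximal hypomonotonicity of $F$ follows by pushing a locally maximal monotone extension of $F-\kappa\,\text{Id}$ through the bi-Lipschitz shift $\mathsf{T}_{\kappa+\rho}$ to produce one for $F+\rho\,\text{Id}$, for any fixed $\rho>0$. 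For (c)$\Rightarrow$(a), Lemma~\ref{lem-met-reg} already supplies the strong-monotonicity inequality with modulus $\kappa$, so only the maximality clause needs attention; that is handled by mimicking Proposition~\ref{prop-lips-max-hypo}: strong metric regularity writes $\gph F$ locally as $\gph\vartheta^{-1}$ with $\vartheta\colon V\to U$ Lipschitz, and for any monotone $S\supset\gph F\cap(U\times V)$ and $(x,y)\in\gph S\cap(U\times V)$, probing monotonicity of $S$ against $(\vartheta(y+\epsilon v),y+\epsilon v)\in\gph F$ for unit $v$ and letting $\epsilon\searrow 0$ forces $\vartheta(y)=x$.

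The main step is (b)$\Rightarrow$(c). Choose $\rho>0$ so that $G:=F+\rho\,\text{Id}$ is locally maximal monotone at $(\bar x,\bar y_0)$ with $\bar y_0:=\bar y+\rho\bar x$; Lemma~\ref{sum-rule-gd} then makes $DG$ locally $c$-lower-definite with $c:=\kappa+\rho>0$. I would take a global maximal monotone extension $\tilde G$ of $G$ (coinciding with $G$ on a neighborhood) and exploit the Minty parametrization: the resolvent $R:=(\text{Id}+\tilde G)^{-1}$ is single-valued and firmly non-expansive, and $u\mapsto(R(u),u-R(u))$ is a Lipschitz bijection $\H\to\gph\tilde G$. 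Given two points $(x_i,y_i)\in\gph G$ near $(\bar x,\bar y_0)$ with $u_i:=x_i+y_i$, the segment $u_t:=(1-t)u_1+tu_2$ yields an absolutely continuous curve $(x(t),y(t)):=(R(u_t),u_t-R(u_t))$ in $\gph\tilde G$ which, up to shrinking the starting neighborhood, remains inside $\gph G$. Proposition~\ref{abs-con-car} gives a.e.\ differentiability, and using local closedness from Lemma~\ref{local-closed} one has $(\dot x(t),\dot y(t))\in T((x(t),y(t));\gph G)=\gph DG(x(t),y(t))$ a.e., whence lower-definiteness yields $\langle\dot y(t),\dot x(t)\rangle\ge c\|\dot x(t)\|^2$. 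Combining this with the identity $\dot x(t)+\dot y(t)=u_2-u_1$, integrating on $[0,1]$, and using the Cauchy--Schwarz bound $\int_0^1\|\dot x(t)\|^2\,dt\ge\|x_2-x_1\|^2$, one obtains $\langle y_2-y_1,x_2-x_1\rangle\ge c\|x_2-x_1\|^2$; hence $G$ is strongly locally monotone with modulus $c$.

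To bootstrap strong monotonicity into strong metric regularity, applying the same estimate to $u_i=R(u_i)+(u_i-R(u_i))$ gives $\langle u_1-u_2,R(u_1)-R(u_2)\rangle\ge(1+c)\|R(u_1)-R(u_2)\|^2$, so by Cauchy--Schwarz $R$ is a $(1+c)^{-1}$-contraction on a neighborhood of $u^\ast:=\bar x+\bar y_0$. Hence for each $y$ near $\bar y_0$ the map $\Phi_y(x):=R(x+y)$ is a strict contraction on $\mathbb{B}_r[\bar x]$ that leaves the ball invariant, and Banach's theorem produces a unique $x_y\in\mathbb{B}_r[\bar x]$ with $y\in\tilde G(x_y)=G(x_y)$; the contraction estimate makes $y\mapsto x_y$ Lipschitz with constant $1/c$. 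Strong metric regularity of $F$ then follows from that of $G$ via the bi-Lipschitz shift $\mathsf{T}_\rho^{-1}$, completing (c). For finite dimensions, (a)$\Rightarrow$(d) is secured by Lemmas~\ref{slm-low-def} and~\ref{local-closed} together with continuity of $F^{-1}$ from strong metric regularity; conversely, (d)$\Rightarrow$(c) uses $DF\subset D_\ast F$ for $\kappa$-lower-definiteness of $DF$, and Lemma~\ref{st-met-sub-lemma} applied pointwise on $\gph F$ near $(\bar x,\bar y)$ to get strong metric subregularity that the $\liminf$ condition (local non-emptiness of $F^{-1}$) promotes to strong metric regularity. The principal obstacle is (b)$\Rightarrow$(c): Rademacher-type differentiability fails in infinite dimensions, so the Minty parametrization is essential to generate a concrete absolutely continuous curve in $\gph\tilde G$ whose tangents inherit the graphical-derivative bound, and the contraction estimate supporting the fixed-point step only emerges after the strong-monotonicity inequality has been secured.
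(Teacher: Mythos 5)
Your overall architecture (the circle (a)$\Rightarrow$(b)$\Rightarrow$(c)$\Rightarrow$(a), then the finite-dimensional part) matches the paper, and your core of (b)$\Rightarrow$(c) is a genuinely different, essentially self-contained route: the paper gets strong metric regularity of $F+\gamma\text{Id}$ by citing a known result for strongly locally maximal monotone maps and then Lemma \ref{lem-met-reg}, whereas you rebuild it from the Minty parametrization of a maximal monotone extension; your curve/integration estimate and the Banach fixed-point localization of $G^{-1}$, $G=F+\rho\text{Id}$, are correct. The gap is your last step there: ``strong metric regularity of $F$ then follows from that of $G$ via the bi-Lipschitz shift $\mathsf{T}_\rho^{-1}$'' is not a valid inference, because strong metric regularity is \emph{not} preserved by subtracting $\rho\text{Id}$: take $G=\rho\text{Id}$, which is strongly metrically regular everywhere, while $F=G-\rho\text{Id}\equiv 0$ is strongly metrically regular nowhere. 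What makes the passage work is quantitative: your localization of $G^{-1}$ is Lipschitz with constant $1/(\kappa+\rho)$ and the perturbation $-\rho\text{Id}$ is $\rho$-Lipschitz with $\rho/(\kappa+\rho)<1$, so you must run one more contraction/implicit-function argument (solve $x=x_{w+\rho x}$ with your map $y\mapsto x_y$), or invoke a Lipschitz-perturbation theorem for strong metric regularity — this is exactly what the paper does with $\text{reg}(F+\gamma\text{Id};(\bar x,\bar y+\gamma\bar x))\leq 1/(\kappa+\gamma)$ and the cited perturbation lemma. The ingredients are in your proof, but the step as stated is unjustified and the stated reason is false. (A smaller instance of the same issue appears in your (a)$\Rightarrow$(b): pushing a maximal monotone extension of $F-\kappa\text{Id}$ through a shift silently uses the equivalence of the paper's local-maximality definition with the ``coincides locally with a globally maximal monotone map'' definition; the clean route is simply Proposition \ref{prop-slmm-lmm} applied to $F+\rho\text{Id}$ with modulus $\rho$.)

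The second genuine gap is in (d)$\Rightarrow$(c). You propose to apply Lemma \ref{st-met-sub-lemma} pointwise to get strong metric subregularity near $(\bar x,\bar y)$ and then let the condition $\bar x\in\liminf_{y\to\bar y}F^{-1}(y)$ ``promote'' it to strong metric regularity. No such promotion holds: strong metric subregularity (even at all nearby graph points) only yields isolatedness of solutions and gives neither single-valuedness nor Lipschitz continuity of a localization of $F^{-1}$, and nonemptiness of $F^{-1}(y)$ near $\bar y$ adds neither. The hypothesis that actually must be exploited is the $\kappa$-lower-definiteness of the \emph{strict} derivative $D_\ast F(\bar x,\bar y)$: it bounds the outer norm of $D_\ast F(\bar x,\bar y)^{-1}$ by $1/\kappa$, and then the finite-dimensional characterization of strong metric regularity via the strict graphical derivative (Dontchev--Rockafellar, Theorem 4D.1), combined with local closedness of $\gph F$ and $\bar x\in\liminf_{y\to\bar y}F^{-1}(y)$, delivers (c); this is the paper's argument, and some result of this strength is needed — Lemma \ref{st-met-sub-lemma} plus the liminf condition is strictly weaker. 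Your (a)$\Rightarrow$(d) and (c)$\Rightarrow$(a) (the probe argument with $(\vartheta(y+\epsilon v),y+\epsilon v)$, replacing the paper's citation for maximality) are fine.
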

	\begin{proof} The proof of the equivalence of \emph{(a)}, \emph{(b)} and \emph{(c)} will be completed by demonstrating \emph{(a)}$\Rightarrow$\emph{(b)}, \emph{(b)}$\Rightarrow$\emph{(c)} and \emph{(c)}$\Rightarrow$\emph{(a)}.\\
		\underline{\emph{(a)}$\Rightarrow$\emph{(b)}}: The lower definiteness of $DF$ follows from Lemma \ref{slm-low-def} and the local maximal hypomonotonicity is direct.\\
		\underline{\emph{(b)}$\Rightarrow$\emph{(c)}}: We have $F$ is locally maximal hypomonotone at $(\bar x,\bar y)$, it follows that there is $\ell>0$ such that $F+\ell\text{Id}$ is locally maximal monotone at $(\bar x,\bar y+\ell\bar x)$. Then, by taking $\gamma>\ell$ fixed, from Proposition \ref{prop-slmm-lmm} we have that $F+\gamma\text{Id}$ is strongly locally maximal monotone at $(\bar x,\bar y+\gamma\bar x)$ with modulus $\gamma-\ell$. From \cite[Lemma 6.8]{MR4769808} we have $F+\gamma\text{Id}$ is strongly metrically regular and by using Lemma \ref{sum-rule-gd} we have $D(F+\gamma\text{Id})$ is $(\kappa+\gamma)$-lower-definite around $(\bar x,\bar y+\gamma\bar x)$. Then, Lemma \ref{lem-met-reg} says us $\text{reg}(F+\gamma\text{Id};(\bar x,\bar y+\gamma\bar x))\leq \frac{1}{\gamma+\kappa}$. Define $g:= -\gamma\text{Id}$, we have that $g$ is $\gamma$-Lipschitz. Since $\frac{\gamma}{\kappa+\gamma}<1$, from \cite[Lemma 5F.1]{MR3288139} we obtain directly that $F = (F+\gamma\text{Id})+g$ is strongly metrically regular at $(\bar x, \bar y)$ and we have done.
		\\
		\underline{\emph{(c)}$\Rightarrow$\emph{(a)}}: By Lemma \ref{lem-met-reg}, we have $F$ is strongly locally monotone at $(\bar x,\bar y)$ with modulus $\kappa$. Since $F$ is also strongly metrically regular, the maximality follows from \cite[Lemma 6.8]{MR4769808}.\\
		Now, we show the equivalence of \emph{(d)} with the other assertions provided $\H$ is finite-dimensional.\\
    \underline{\emph{(a)}$\Rightarrow$\emph{(d)}}:  From Lemma \ref{slm-low-def}, we have directly that $D_\ast F$ is $\kappa$-lower-definite around $(\bar x,\bar y)$. We use Lemma \ref{local-closed} to get that $\gph F$ is locally closed at $(\bar x,\bar y)$ and finally, the condition $\bar x\in \liminf_{y\to \bar y}F^{-1}(y)$ can be obtained directly since $F$ is strongly metrically regular (see \cite[Lemma 6.8]{MR4769808}), then the assertion \emph{(d)} is verified.\\
    \underline{\emph{(d)}$\Rightarrow$\emph{(c)}}: Since $\gph DF(x,y)\subset \gph D_\ast F(x,y)$ for all $(x,y)\in \gph F$, we have directly that $DF$ is $\kappa$-lower-definite around $(\bar x,\bar y)$. We are going to prove that $F$ is strongly metrically regular at $(\bar x,\bar y)$. For one side, note that if $b\in \mathbb{B}$ and $a\in D_\ast F(\bar x,\bar y)^{-1}(b)$ we have $b\in D_\ast F(\bar x,\bar y)(a)$, then the lower-definiteness implies that $\kappa\|a\|^2\leq \langle a,b\rangle$. The Cauchy-Schwarz inequality yields $\|a\|\leq \frac{1}{\kappa}$, so it proves that $$|D_\ast F(\bar x,\bar y)^{-1}|^+ := \sup_{b\in \mathbb{B}}\sup_{a\in D_\ast F(\bar x,\bar y)^{-1}(b)} \|a\|<\infty,$$ and together with the local closedness of $\gph F$ and $\bar x\in \liminf_{y\to\bar y} F^{-1}(y)$, we have $F$ is strongly metrically regular at $(\bar x,\bar y)$ (see \cite[Theorem 4D.1]{MR3288139} which is especially true in finite-dimensional setting), then it implies that \emph{(c)} holds and we have completed the proof.
	\end{proof}

    \begin{remark}
        Regarding the equivalence between \emph{(a)} and \emph{(b)} in the theorem above, as we previously pointed out, an analogous result based on coderivatives is given in \cite[Theorem 3.4]{MR3485980}. In contrast to our result, note that we require local maximal hypomonotonicity in \emph{(b)}, whereas their result does not impose maximality. In Example \ref{exam1} below, we demonstrate that this assumption is necessary in our case.
    \end{remark}
	
	The following example shows that the statements \emph{(b)}, \emph{(c)} and \emph{(d)} in Theorem \ref{prop-s-m-r} cannot be weakened in order to only require the lower definiteness of $DF$ (or $D_\ast F$) and to obtain the same conclusion.
	\begin{example}\label{exam1}
		Consider $F\colon \R\tto \R$ given by $F(x) = \{x\}$ for $x<0$, $F(x) = \{x+1\}$ for $x>0$ and $F(0) = \{0,1\}$. Note that for all $(x,y)\in \gph F$ and $(u,v)\in \gph D_\ast F(x,y)$, $\langle u,v\rangle\geq \|u\|^2$. However, note that $0\notin \liminf_{y\to 0}F^{-1}(y)$. We can also see that $F$ is not strongly metrically regular at $(0,0)$. Finally, we can see that $F$ is strongly locally monotone at $(0,0)$ but the maximality fails.
	\end{example}
	
	In the above example the property \emph{(c)} fails due to $F$ is not strongly metrically regular, but it is not metrically regular either. Then, it is worth asking if it is possible to change strong metric regularity in \emph{(c)} by a weaker condition. Observe that strong metric subregularity (in finite dimension) is not enough since Lemma \ref{st-met-sub-lemma}. The following example shows that the assumption of metric regularity is not enough either.
	\begin{example}
		Consider $F\colon \R\tto \R$ given by $F(x) = \{x\}$ for $x<0$ and $F(x) = \{x,2x\}$ for $x\geq 0$. At $(0,0)$, $F$ is not locally monotone and we can see that $DF(x,y)$ is $1$-lower-definite for all $(x,y)\in \gph F$. We can also prove that $\forall x,y\in \R$, $d(x;F^{-1}(y))\leq d(y;F(x))$, in particular, $F$ is metrically regular at $(0,0)$, but it is not strongly metrically regular there. 
	\end{example}
	The following simple example shows that some equivalences stated in Theorem \ref{prop-s-m-r} are not satisfied when $\kappa \leq 0$.
	\begin{example}
		Take $F\colon \R\tto \R$ given by $F(x) = \{0\}$ for all $x\in \R$. Note that $F$ is maximal monotone (globally) and $DF$ is  $0$-lower-definite at every point of $\gph F$. However, for all $(x_0,y_0)\in \gph F$, $x_0\notin \liminf_{y\to y_0}F^{-1}(y)$ and $F$ is not strongly metrically regular at any point of $\gph F$. It follows that \emph{(b)}$\Rightarrow$\emph{(c)} and \emph{(b)}$\Rightarrow$\emph{(d)} fail.
	\end{example}
	However, we can establish an analogous equivalence of the assertions \emph{(a)} and \emph{(b)} in Theorem \ref{prop-s-m-r}, where now the constant $\kappa$ is not necessarily positive.
	\begin{corollary}\label{mon-0low-def}
		Let $F\colon \H\tto\H$ be a set-valued mapping, and $(\bar x,\bar y)\in \gph F$. Let $\kappa\in \R$. Then $F-\kappa\text{Id}$ is locally maximal monotone at $(\bar x,\bar y-\kappa\bar x)$ if and only if $DF$ is $\kappa$-lower-definite around $(\bar x,\bar y)$ and $F$ is locally maximal hypomonotone at $(\bar x,\bar y)$.
	\end{corollary}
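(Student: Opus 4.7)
The plan is to reduce the claim to the equivalence \emph{(a)}$\Leftrightarrow$\emph{(b)} of Theorem \ref{prop-s-m-r} by absorbing the shift $-\kappa\text{Id}$ into a sufficiently large positive tilt, so that the effective modulus becomes strictly positive and Theorem \ref{prop-s-m-r} becomes applicable.

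First I would fix any $\gamma\in\R$ with $\gamma > \max\{0,-\kappa\}$, so that $\gamma+\kappa > 0$, and introduce the shifted mapping $G := F+\gamma\text{Id}$ together with the base point $(\bar x,\bar y+\gamma\bar x)\in\gph G$. Writing $F-\kappa\text{Id} = G-(\gamma+\kappa)\text{Id}$ and invoking Proposition \ref{prop-slmm-lmm}, the local maximal monotonicity of $F-\kappa\text{Id}$ at $(\bar x,\bar y-\kappa\bar x)$ is equivalent to $G$ being strongly locally maximal monotone at $(\bar x,\bar y+\gamma\bar x)$ with modulus $\gamma+\kappa>0$. Theorem \ref{prop-s-m-r} then converts the latter into the conjunction of $DG$ being $(\gamma+\kappa)$-lower-definite around $(\bar x,\bar y+\gamma\bar x)$ and $G$ being locally maximal hypomonotone at that point.

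The remaining task is to translate both conditions back to $F$. Lemma \ref{sum-rule-gd} immediately gives that $DG$ is $(\gamma+\kappa)$-lower-definite around $(\bar x,\bar y+\gamma\bar x)$ if and only if $DF$ is $\kappa$-lower-definite around $(\bar x,\bar y)$. For the hypomonotonicity I would verify the shift invariance: $G$ is locally maximal hypomonotone at $(\bar x,\bar y+\gamma\bar x)$ if and only if $F$ is so at $(\bar x,\bar y)$. The nontrivial step is that if $F+\rho'\text{Id}$ is locally maximal monotone at $(\bar x,\bar y+\rho'\bar x)$ for some $\rho'>0$, then by Proposition \ref{prop-slmm-lmm} the addition of any further positive multiple of $\text{Id}$ produces a strongly locally maximal monotone operator, hence still locally maximal monotone, so one can always adjust the hypomonotone parameter on either side irrespective of the sign of $\gamma-\rho'$.

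Chaining these three equivalences yields the corollary. The only delicate step I anticipate is this bookkeeping of the hypomonotone constants on either side of the shift by $\gamma\text{Id}$; everything else is a direct quotation of Proposition \ref{prop-slmm-lmm}, Lemma \ref{sum-rule-gd}, and Theorem \ref{prop-s-m-r}.
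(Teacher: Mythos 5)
Your argument is correct and rests on essentially the same ingredients as the paper's proof: a positive shift making the modulus $\gamma+\kappa>0$, the equivalence \emph{(a)}$\Leftrightarrow$\emph{(b)} of Theorem \ref{prop-s-m-r}, Lemma \ref{sum-rule-gd}, and Proposition \ref{prop-slmm-lmm}. The only organizational difference is that the paper handles necessity directly via Lemma \ref{slm-low-def} and, for sufficiency, draws the shift $\ell>-\kappa$ from the hypomonotonicity hypothesis itself (splitting the cases $\kappa>0$ and $\kappa\leq 0$), whereas you fix $\gamma$ a priori and run one chain of equivalences, which requires the additional --- and correctly justified --- observation that local maximal hypomonotonicity is invariant under adding $\gamma\,\mathrm{Id}$.
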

	\begin{proof}
		The necessity part is straightforward from Lemma \ref{slm-low-def}. So, now we are going to deal with the converse. Observe that the case $\kappa>0$ is done in Theorem \ref{prop-s-m-r}, so we suppose that $\kappa\leq 0$. Since $F$ is locally maximal hypomonotone at $(\bar x,\bar y)$, there is $\ell>0$ such that $F+\ell\text{Id}$ is locally maximal monotone at $(\bar x,\bar y+\ell\bar x)$. From Proposition \ref{prop-slmm-lmm}, we can suppose that $\ell>-\kappa$. From Lemma \ref{sum-rule-gd} we have $F+\ell\text{Id}$ is $(\ell + \kappa)$-lower-definite around $(\bar x,\bar y+\ell\bar x)$, so we can apply Theorem \ref{prop-s-m-r} in order to obtain that $F+\ell \text{Id}$ is strongly locally maximal monotone at $(\bar x,\bar y+\ell \bar x)$ with modulus $\ell+\kappa$. By using Proposition \ref{prop-slmm-lmm}, we have $F-\kappa\text{Id}$ is locally maximal monotone at $(\bar x,\bar y-\kappa\bar x)$, concluding the desired.
	\end{proof}
	
	\section{Characterization of Variational Convexity}
	
	This section is devoted to study how Theorem \ref{prop-s-m-r} acts when the set-valued mapping is the subdifferential of a given function. We are going to prove that it allows to give a second-order characterization of variational convexity, based on first-order analysis of subdifferential mapping. This kind of results has been recently studied in \cite{gfrerer2025} in finite-dimensional setting, by performing different techniques to the ones presented in our work. Furthermore, in \cite{MR4784083} are presented characterizations of variational convexity through first order properties of coderivatives, where subdifferential continuity is assumed  (see Definition \ref{subdif-cont}),  while in \cite[Theorem 5.5]{MR4887480} and \cite[Theorem 4.4]{khanh2025characterizations} are provided similar characterizations based on coderivatives, without subdifferential continuity in finite-dimensional setting. First, we recall some definitions and properties from variational analysis.

    For a given set $S\subset \H$, we define its indicator function as $x\mapsto \delta(x;S)$ where $\delta(x;S) = 0$ if $x\in S$ and $\delta(x;S) = \infty$ if $x\notin S$. 
    
    Consider a function $f\colon \H\to \R\cup\{\infty\}$ and $x\in \dom f$. We say an element $\zeta$ belongs to the \emph{proximal subdifferential} of $f$ at $x$, denoted by $\partial_P f(x)$ if there are $\eta,\sigma>0$ such that $$f(y)\geq f(x) + \langle \zeta,y-x\rangle - \frac{\sigma}{2}\|y-x\|^2, \forall y\in \mathbb{B}_\eta(x).$$
    Moreover, an element $\zeta$ belongs to the \emph{limiting subdifferential} of $f$ at $x$, denoted by $\partial f(x)$, if there are sequences $(x_n)\to x$ and $(\zeta_n)\rightharpoonup \zeta$ such that $f(x_n)\to f(x)$ and $\forall n\in\N:\zeta_n\in \partial_P f(x_n)$.
    
    A given function $f\colon \H\to \Rex$ is said to be $\sigma$-convex for some $\sigma\in\R$ if $f-\frac{\sigma}{2}\|\cdot\|^2$ is convex on $\H$. We also say $f$ is locally $\sigma$-convex at $\bar x\in \dom f$ if there is a convex open neighborhood $U$ of $\bar x$ such that $f+\delta(\cdot;U)$ is $\sigma$-convex. When $\sigma = 0$, we simply omit the dependence of $\sigma$.\\
    The following notions were introduced by Poliquin and Rockafellar in \cite{Poliquin1996}.
	\begin{definition}
		Let $f\colon \H\to\R\cup\{\infty\}$ be a proper and lsc function. Given $\bar x\in \dom \partial f$ and $\bar y\in \partial f(\bar x)$, we say $f$ is prox-regular at $\bar x$ for $\bar y$ if there are some reals $\epsilon,\sigma>0$ such that for all $x,x'\in \mathbb{B}_{\epsilon}(\bar x)$ with $|f(x)-f(\bar x)|<\epsilon$ and all $y\in \mathbb{B}_{\epsilon}(\bar y)$ with $y\in \partial f(x)$ we have
		\begin{equation}\label{pr-def}
			f(x')\geq f(x) + \langle y,x'-x\rangle - \frac{\sigma}{2}\|x'-x\|^2.
		\end{equation}
	\end{definition}
		
		\begin{definition}\label{subdif-cont}
			A proper function $f\colon \H\to\R\cup\{\infty\}$ is said to be subdifferentially continuous at $\bar x$ for $\bar y\in \partial f(\bar x)$ if $(x,y)\mapsto f(x)$ is continuous at $(\bar x,\bar y)$ relative to $\gph \partial f$.
		\end{definition}
	We now define the variational convexity, a concept introduced in \cite{MR3995335} and widely studied lately (see, e.g., \cite{MR4550949, Rockafellar2025-2, Rockafellar2025, MR4784083}).
	\begin{definition}
		An lsc function $f$ will be called \textit{variationally convex} at $\bar{x}$ for $\bar{y} \in \partial f(\bar{x})$ with modulus $\sigma\in\R$ if for some open convex neighborhood $U \times V$ of $(\bar{x}, \bar{y})$ there is a $\sigma$-convex lsc function $\hat{f} \leq f$ on $U$ such that, for some $\epsilon > 0$, 
		\begin{equation*}
			U_\varepsilon \times V \cap \operatorname{gph} \partial f = U \times V \cap \operatorname{gph} \partial \hat{f}
		\end{equation*}
		and
		\begin{equation*}
			\quad f(x) = \hat{f}(x) \text{ for all } (x,y)\in U_\varepsilon \times V \cap \operatorname{gph} \partial f.
		\end{equation*}
		where $U_\epsilon := \{x\in U : f(x)< f(\bar x)+\epsilon\}$. When $\sigma = 0$ we simply say $f$ is variationally convex at $\bar x$ for $\bar y$.
	\end{definition}
	The following lemma establishes the equivalence between variational convexity with a given modulus and the variational convexity of the shifted function (see, e.g., \cite[Theorem 5.3]{MR4784083} for a similar result).
	\begin{lemma}\label{shift-var-convex}
		Consider a proper and lsc function $ f \colon \H \to \R\cup \{\infty\} $. Given $\bar x\in \dom f$, $\bar y\in \partial f(\bar x)$ and $\sigma\in \R$, one has the following assertions are equivalent
		\begin{enumerate}
			\item [(a)] $f$ is variationally convex at $\bar x$ for $\bar y$ with modulus $\sigma$.
			\item [(b)] $f_\sigma := f-\frac{\sigma}{2}\|\cdot-\bar x\|^2$ is variationally convex at $\bar x$ for $\bar y$.
		\end{enumerate}
	\end{lemma}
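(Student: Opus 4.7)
My plan is to exploit the observation that the map $f \mapsto f_\sigma = f - \frac{\sigma}{2}\|\cdot - \bar x\|^2$ is a smooth perturbation, so the subdifferential sum rule gives $\partial f_\sigma(x) = \partial f(x) - \sigma(x - \bar x)$ (and the same formula for the convex minorants appearing in the definition of variational convexity). Consequently, the graphs on either side of the equivalence correspond bijectively under the linear homeomorphism $(x, y) \mapsto (x, y - \sigma(x - \bar x))$, and the only nontrivial work is to match product neighborhoods and sublevel-set parameters under this change.

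For the direction (a)$\Rightarrow$(b), given the data $(U, V, \hat f, \epsilon)$ for $f$, I set $\hat f_\sigma := \hat f - \frac{\sigma}{2}\|\cdot - \bar x\|^2$, which is lsc and convex (because $\hat f$ is $\sigma$-convex) and satisfies $\hat f_\sigma \leq f_\sigma$ on $U$. I then pick ball neighborhoods $U' = \mathbb{B}_{r'}(\bar x) \subset U$ and $V' = \mathbb{B}_{\rho'}(\bar y)$ with $r', \rho'$ small enough that $y' + \sigma(x - \bar x) \in V$ for all $(x, y') \in U' \times V'$, and I choose $\epsilon' > 0$ with $\epsilon' + \frac{\sigma_+}{2}(r')^2 \leq \epsilon$ (here $\sigma_+ := \max\{\sigma, 0\}$). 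The forward inclusion $U'_{\epsilon'} \times V' \cap \gph \partial f_\sigma \subseteq U' \times V' \cap \gph \partial \hat f_\sigma$ is straightforward: setting $y := y' + \sigma(x - \bar x)$, my choices force $(x, y) \in U_\epsilon \times V \cap \gph \partial f$, so hypothesis (a) provides $y \in \partial \hat f(x)$ and $f(x) = \hat f(x)$; the shift then yields $y' \in \partial \hat f_\sigma(x)$ and $f_\sigma(x) = \hat f_\sigma(x)$. The direction (b)$\Rightarrow$(a) is built dually from the data $(U', V', \hat f_\sigma, \epsilon')$ for $f_\sigma$: I set $\hat f := \hat f_\sigma + \frac{\sigma}{2}\|\cdot - \bar x\|^2$ (which is $\sigma$-convex and lsc) and shrink to $U = \mathbb{B}_R(\bar x) \subset U'$, $V = \mathbb{B}_\rho(\bar y)$, choosing $\epsilon > 0$ with $\epsilon + \frac{(-\sigma)_+}{2}R^2 \leq \epsilon'$.

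The main obstacle is the reverse inclusion. A naive sublevel-set estimate starting from $f(x) < f(\bar x) + \epsilon$ gives only $f_\sigma(x) < f_\sigma(\bar x) + \epsilon - \frac{\sigma}{2}\|x - \bar x\|^2$, so for $\sigma > 0$ it would demand $\epsilon' \geq \epsilon$, contradicting the forward constraint $\epsilon' \leq \epsilon - \frac{\sigma}{2}(r')^2$. The remedy is to bypass sublevel-set arithmetic and invoke the subgradient inequality for the convex minorant: for $(x, y') \in U' \times V' \cap \gph \partial \hat f_\sigma$, convexity of $\hat f_\sigma$ together with $y' \in \partial \hat f_\sigma(x)$ yields
\[
\hat f_\sigma(x) - \hat f_\sigma(\bar x) \leq \langle y', x - \bar x\rangle \leq (\|\bar y\| + \rho')\,r',
\]
which I make smaller than $\epsilon'$ by shrinking $r'$ further. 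Combined with the pointwise equality $f_\sigma(x) = \hat f_\sigma(x)$ (obtained from $f = \hat f$ under (a) at the corresponding point, together with $\hat f_\sigma(\bar x) = \hat f(\bar x) = f(\bar x) = f_\sigma(\bar x)$), this delivers $x \in U'_{\epsilon'}$ and closes the reverse inclusion. The symmetric step for (b)$\Rightarrow$(a) uses the $\sigma$-convex version $\hat f(x) - \hat f(\bar x) \leq \langle y, x - \bar x\rangle - \frac{\sigma}{2}\|x - \bar x\|^2$, bounded by $(\|\bar y\| + \rho)R + \frac{|\sigma|}{2}R^2$, which is made $< \epsilon$ for $R$ sufficiently small.
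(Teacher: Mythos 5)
Your proof is correct and follows essentially the same route as the paper's: shift the convex minorant by the same quadratic, apply the exact sum rule $\partial f_\sigma(x)=\partial f(x)-\sigma(x-\bar x)$, transport the graph identity through the linear map $(x,y)\mapsto (x,y-\sigma(x-\bar x))$, and then adjust the product neighborhoods and the sublevel parameter. Your explicit subgradient-inequality step for getting points back into the shifted sublevel set (after shrinking the radius) is exactly the verification the paper compresses into ``we can verify that\ldots'', so it is a more detailed rendering of the same argument rather than a different approach.
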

	\begin{proof}
		Suppose \emph{(a)} holds. When $\sigma = 0$ there is nothing to prove. Suppose then, $\sigma\neq 0$. Then, there is an open convex neighborhood $U\times V$ of $(\bar x,\bar y)$ and a $\sigma$-convex function $\hat f\leq f$ on $U$ such that, for some $\epsilon>0$, 
		\begin{equation}\label{eqn-var-cnv}
			U_\epsilon^f\times V\cap \gph \partial f = U\times V\cap \gph \partial \hat f,	
		\end{equation}
		with $f = \hat f$ on the points where \eqref{eqn-var-cnv} is satisfied. Define $\hat{f}_\sigma := \hat{f}-\frac{\sigma}{2}\|\cdot-\bar x\|^2$, we have $\hat{f}_\sigma$ is convex. The sum rule for limiting subdifferential says us $\partial f_\sigma(x) = \partial f(x) - \sigma(x-\bar x)$ for all $x\in \dom f$. Define $\Pi_\sigma\colon (x,y)\mapsto (x,y+\sigma(x-\bar x))$, then, there is $\delta>0$ such that $W\times Z\subset \Pi_\sigma^{-1}(U\times V)$ where $W = \mathbb{B}_{\delta}(\bar x)$ and $Z = \mathbb{B}_\delta(\bar y)$, then by \eqref{eqn-var-cnv} we have
		\begin{equation*}
			W_\epsilon^f\times Z\cap \gph \partial f_\sigma = W\times Z\cap \gph\partial \hat{f}_\sigma.
		\end{equation*}
		Consider $\mathcal{W} = \mathbb{B}_{r}(\bar x)$ where $r = \min\{\delta,\sqrt{\epsilon/|\sigma|}\}$. The choice of $r$ implies $$\mathcal{W}_{\epsilon/2}^{f_\sigma}=\{x\in \mathcal{W} : f_\sigma(x)<f_\sigma(\bar x) + \epsilon/2\}\subset \{x\in \mathcal{W} : f(x)<f(\bar x) + \epsilon\}.$$ 
		Finally, we can verify that $\mathcal{W}_{\epsilon/2}^{f_\sigma}\times Z\cap \gph f_\sigma = \mathcal{W}\times Z\cap \gph \partial \hat{f}_\sigma$ and it is easy to see that $f_\sigma = \hat{f}_\sigma$ there. Then, \emph{(b)} holds. The converse is quite similar to the argument just given.
	\end{proof}
	
	For $\epsilon>0$, we denote by $\mathscr{F}_\epsilon^f$ to the $f$-attentive localization of $\partial f$ at $(x_0,y_0)\in \gph \partial f$ given by 
	\begin{equation*}
		\mathscr{F}_\epsilon^f \colon x\tto \{y: y\in \partial f(x), \|x-x_0\|<\epsilon, \|y-y_0\|<\epsilon, f(x)<f(x_0)+\epsilon\}.
	\end{equation*}
	The following proposition entails a characterization of prox regular functions in terms of local hypomonotonicity of the $f$-attentive localization of $\partial f$. A similar equivalence is entailed in \cite[Theorem 1]{Rockafellar2025-2}.
	\begin{proposition}\label{prop-prox-reg}
		Consider a proper and lsc function $f\colon \H\to \R\cup\{\infty\}$. Let $x_0\in \dom f$ and $y_0\in \partial f(x_0)$. Then, the following assertions are equivalent
		\begin{enumerate}
			\item [(a)] $f$ is prox regular at $x_0$ for $y_0$.
			\item [(b)] $y_0\in \partial_P f(x_0)$ and there is $\bar\epsilon>0$, such that for all $\epsilon\in ]0,\bar \epsilon[$, $\mathscr{F}_\epsilon^f$ is locally maximal hypomonotone at $(x_0,y_0)$.
			\item [(c)] $y_0\in \partial_P f(x_0)$ and $\mathscr{F}_\epsilon^f$ is locally hypomonotone at $(x_0,y_0)$ for some $\epsilon>0$.
		\end{enumerate}
	\end{proposition}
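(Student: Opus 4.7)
My plan is to establish the cycle $(b)\Rightarrow(c)\Rightarrow(a)\Rightarrow(b)$. The step $(b)\Rightarrow(c)$ is immediate from the definitions. For $(a)\Rightarrow(c)$, which I also use inside $(a)\Rightarrow(b)$, the condition $y_0\in\partial_P f(x_0)$ follows by specializing \eqref{pr-def} to $x=x_0$, $y=y_0$; and the hypomonotonicity of $\mathscr{F}_\epsilon^f$ with modulus $\sigma$ comes from the standard symmetry trick, applying \eqref{pr-def} twice to two pairs $(x_1,y_1),(x_2,y_2)\in\gph\mathscr{F}_\epsilon^f$ with the roles of $(x,y)$ and $(x',y')$ swapped, and adding the resulting inequalities so that the $f$-terms cancel, yielding $\langle y_1-y_2,x_1-x_2\rangle\geq -\sigma\|x_1-x_2\|^2$.

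The central implication is $(c)\Rightarrow(a)$. Given $y_0\in\partial_P f(x_0)$ and the $f$-attentive local monotonicity of $\mathscr{F}_\epsilon^f+\sigma\operatorname{Id}$ on some product neighborhood $U\times V$, I plan to fix $\rho>\sigma$ and analyze the Moreau envelope $e_\rho f(z):=\inf_u\{f(u)+\tfrac{\rho}{2}\|u-z\|^2\}$ on a small neighborhood of $x_0$. Using the hypomonotonicity, the lower semicontinuity of $f$, and the quadratic minorant coming from $y_0\in\partial_P f(x_0)$, I expect to be able to shrink $U\times V$ so that for $z$ close to $x_0$ the infimum is attained at a unique $P_\rho f(z)\in U$, the map $P_\rho f$ is locally Lipschitz, and for every $(u,y)\in\gph\mathscr{F}_{\epsilon'}^f$ with $\epsilon'$ small the first-order optimality condition forces $P_\rho f(u+\rho^{-1}y)=u$, whence $e_\rho f(u+\rho^{-1}y)=f(u)+\tfrac{1}{2\rho}\|y\|^2$. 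Comparing with the trivial majorant $e_\rho f(u+\rho^{-1}y)\leq f(x')+\tfrac{\rho}{2}\|x'-u-\rho^{-1}y\|^2$ for $x'$ near $u$, expanding the quadratic and cancelling $\tfrac{1}{2\rho}\|y\|^2$, produces the prox-regularity inequality with modulus $\rho$.

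For $(a)\Rightarrow(b)$, the hypomonotonicity obtained above must be upgraded to local maximality. My plan is to verify that, after a further shrinking of $\epsilon$, the graph of $\mathscr{F}_\epsilon^f+\sigma\operatorname{Id}$ coincides locally with the graph of the subdifferential of a globally proper convex lsc majorant of $f+\tfrac{\sigma}{2}\|\cdot-x_0\|^2$, constructed by truncation with $\delta(\cdot;\overline{\mathbb{B}}_\eta(x_0))$ and convexification. Since the global subdifferential of a proper convex lsc function is maximal monotone, local maximality of $\mathscr{F}_\epsilon^f+\sigma\operatorname{Id}$ follows and transfers back to $\mathscr{F}_\epsilon^f$ via the $\sigma$-shift.

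The main obstacle is in the central implication $(c)\Rightarrow(a)$: one must certify that the minimizer of $f+\tfrac{\rho}{2}\|\cdot-z\|^2$ appearing in $e_\rho f(z)$ indeed lies inside the $f$-attentive localization where monotonicity is available, forcing a delicate tuning between the parameters $\epsilon$, $\epsilon'$, $\rho$, $\sigma$ and the sizes of $U$ and $V$. A secondary but related obstacle in $(a)\Rightarrow(b)$ is to construct the globally convex majorant so that its subdifferential does not introduce spurious elements outside the $f$-attentive localization, which again requires careful control of the truncation radius.
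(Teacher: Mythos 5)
Your outline gets the easy parts right (the equivalence of \emph{(b)}$\Rightarrow$\emph{(c)}, the symmetry trick for hypomonotonicity, and extracting $y_0\in\partial_P f(x_0)$ from \eqref{pr-def}), but both hard directions, exactly the ones you flag as ``obstacles,'' have genuine gaps in the stated Hilbert-space generality. For \emph{(c)}$\Rightarrow$\emph{(a)}, your Moreau-envelope argument hinges on the infimum defining $e_\rho f(z)$ being \emph{attained} near $x_0$, with a unique, locally Lipschitz proximal map $P_\rho f$. The proposition assumes only that $f$ is proper and lsc for the norm topology; no sequential weak lower semicontinuity is assumed (the paper adds that hypothesis only later, in Theorem \ref{char-var-convex}). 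In an infinite-dimensional $\H$, minimizing sequences for $f+\tfrac{\rho}{2}\|\cdot-z\|^2$ converge only weakly, so attainment can fail outright, and the local Lipschitz continuity of $P_\rho f$ is essentially equivalent to the conclusion you are trying to reach rather than something one can ``expect'' after shrinking neighborhoods. Once attainment and membership of the prox point in the $f$-attentive localization are granted, your monotonicity comparison does force $P_\rho f(u+\rho^{-1}y)=u$ and the prox-regularity inequality follows; but those grants are the whole difficulty, and the known infinite-dimensional proofs avoid exact prox points altogether (variational principles, or maximal monotone extensions and resolvents). The paper does not reprove this implication: it cites it (Theorem 11.50 of the referenced monograph), so if you want a self-contained argument you must supply the Ekeland-type replacement for attainment, not assume it.

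For \emph{(a)}$\Rightarrow$\emph{(b)}, the upgrade from hypomonotonicity to \emph{local maximal} hypomonotonicity is the entire content, and your plan---realize $\gph(\mathscr{F}_\epsilon^f+\sigma\operatorname{Id})$ locally as the subdifferential graph of a globally convex lsc function obtained by truncating with $\delta(\cdot;\mathbb{B}_\eta[x_0])$ and convexifying---is not justified and is, in substance, the assertion that $f+\tfrac{\sigma}{2}\|\cdot-x_0\|^2$ is variationally convex at $x_0$. That statement is not easier than the proposition: in this paper variational convexity is \emph{derived from} Proposition \ref{prop-prox-reg} (via Theorem \ref{prop-s-m-r}), and the envelope/convex-representation results it relies on in infinite dimensions require sequential weak lsc, which is unavailable here. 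Moreover, convexification is a global operation: points of the ball where $f$ is small but which lie outside the $f$-attentive localization (values not close to $f(x_0)$, subgradients not close to $y_0$) can generate affine minorants of the convex hull that touch near $x_0$, producing subgradients of your convex majorant near $(x_0,y_0+\sigma x_0)$ that are not in $\gph(\mathscr{F}_\epsilon^f+\sigma\operatorname{Id})$; nothing in your sketch rules this out. The paper's route is different and avoids the construction entirely: after tilting to $g(x)=f(x+x_0)-f(x_0)-\langle y_0,x\rangle$, it shows that the resolvent $(\operatorname{Id}+\lambda\mathscr{F}_\epsilon^f)^{-1}$ is single-valued and continuous near $x_0+\lambda y_0$ for small $\lambda>0$ and then invokes a resolvent characterization of local maximal monotonicity; some such localized Minty-type argument (or that characterization) is what your proof is missing.
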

	\begin{proof}
		First, note that \emph{(b)}$\Rightarrow$\emph{(c)} is straightforward and \emph{(c)}$\Rightarrow$\emph{(a)} follows from 
		\cite[Theorem 11.50]{MR4659163}. It remains to prove \emph{(a)}$\Rightarrow$\emph{(b)}. Indeed, suppose $f$ is prox regular at $x_0$ for $y_0$ with parameters $\hat \epsilon>0$ and $\hat\sigma>0$. From \eqref{pr-def} follows directly that $y_0\in \partial_P f(x_0)$. On the other hand, define 
        \begin{equation*}
            g(x) := f(x+x_0) - f(x_0) -\langle y_0,x\rangle.
        \end{equation*}
        We observe that $g(0) = 0$, $0\in \partial g(0)$ and $g(x)\geq -\frac{\hat\sigma}{2}\|x\|^2$ for all $x\in \mathbb{B}_{\hat\epsilon}(0)$ with $f(x+x_0)<f(x_0)+\hat\epsilon$. If $x\in \mathbb{B}_{\hat\epsilon}(0)$ and $f(x+x_0)\geq f(x_0) + \hat\epsilon$, we have that $g(x) + \langle y_0,x\rangle\geq \hat\epsilon$. Note that $\langle y_0,x\rangle\leq \hat \epsilon + \frac{1+\|y_0\|^2}{4\hat \epsilon}\|x\|^2$ then it follows that $g(x)\geq -(\frac{1+\|y_0\|^2}{4\hat\epsilon})\|x\|^2$, so taking $\alpha := \max\{ \frac{1+\|y_0\|^2}{4\hat\epsilon},\frac{\hat\sigma}{2} \}$ we have 
        \begin{equation*}
            \forall x\in \mathbb{B}_{\hat\epsilon}(0) : g(x) \geq -\alpha\|x\|^2.
        \end{equation*}
        Moreover, note that $\text{Id} + \hat\sigma\mathscr{F}_{\bar \epsilon}^g$ is monotone for some $\bar\epsilon<\hat\epsilon$, and in fact $\text{Id} + \hat\sigma\mathscr{F}_{\epsilon}^g$ is monotone for all $\epsilon\in ]0,\bar\epsilon[$. Take any $\epsilon\in ]0,\bar\epsilon[$, by virtue of \cite[Lemma 11.51]{MR4659163}, for all $\lambda>0$ small enough, $(\text{Id} + \lambda\mathscr{F}_\epsilon^{g})^{-1}$ is single-valued and continuous on a boundary of $0$. We also observe that if $x\in \mathbb{B}_{\epsilon}(x)$ and $g(x)<\epsilon$, then $f(x+x_0)<f(x_0) + \epsilon(1+\|y_0\|)$, hence calculus rules of subdifferential yields $\mathscr{F}_{\epsilon'}^g(x) \subset \mathscr{F}_\epsilon^f(x+x_0)-y_0$, for all $x\in \H$ where $\epsilon' = \frac{\epsilon}{1+\|y_0\|}$. Therefore, 
        \begin{equation*}
            \forall x\in \H :(\text{Id}+\lambda\mathscr{F}_{\epsilon'}^g)^{-1}(x -(x_0+\lambda y_0))+x_0\subset (\text{Id} + \lambda\mathscr{F}_\epsilon^f)^{-1}(x),
        \end{equation*}
 and it follows that for all $\lambda>0$ small enough, $(\text{Id} + \lambda\mathscr{F}_\epsilon^f)^{-1}$ is single-valued and continuous in a boundary of $x_0+\lambda y_0$. From \cite[Theorem 8.5]{MR4769808} it follows that $\mathscr{F}_\epsilon^{f}$ is locally maximal hypomonotone at $(x_0,y_0)$, concluding the desired implication.
	\end{proof}
	
	
	Given $\bar x\in \H$, for a neighborhood $U$ of $\bar x$, $\epsilon >0$ and $f\colon \H\to\R\cup\{\infty\}$, we define $$U_\epsilon^f = \{x\in U : f(x)<f(\bar x) + \epsilon\}.$$
	We also define the $f$-attentive graphical derivative (introduced in \cite{gfrerer2025}) of $f$ at $(x,y)\in \gph \partial f$ in the direction $u$ as
\begin{equation*}
D_f(\partial f)(x,y)(u) =\left\{
  v \;\middle|\;
  \begin{aligned}
  & \exists s_k\searrow 0, \exists (u_k,v_k)\to (u,v) \text{ s.t. }\forall k\in\N,\\
  & y+s_kv_k\in \partial f(x+s_ku_k) \text{ and } f(x+s_k u_k)\to f(x)
  \end{aligned}
\right\}.
\end{equation*}
Observe that when $f$ is subdifferentially continuous, $D_f(\partial f)$ coincides with $D(\partial f)$.\\
We recall that a function $f\colon \H\to \R\cup\{\infty\}$ is said to be sequentially weakly lsc at $x\in \dom f$ if for all $(x_n)\rightharpoonup x$, $\liminf_{n\to \infty} f(x_n)\geq f(x)$.\\
The following theorem can be seen as the infinite-dimensional counterpart of the equivalence \emph{(i)}$\iff$\emph{(iv)} given in \cite[Theorem 4.3]{gfrerer2025}. Our approach consists in applying Theorem \ref{prop-s-m-r} and Corollary \ref{mon-0low-def}, regarding Proposition \ref{prop-prox-reg} since locally maximal hypomonotonicity is required.
	\begin{theorem}\label{char-var-convex}
		Let us consider a proper function $f\colon \H\to \R\cup\{\infty\}$ with $\bar x\in\dom f$, $\bar y\in \partial f(\bar x)$ and $f$ is sequentially weakly lsc around $\bar x$. Let us consider $\kappa\in\R$. Then, the following statements are equivalent 
		\begin{enumerate}
			\item [(a)] $f$ is variationally convex at $\bar x$ for $\bar y$ with modulus $\kappa$.
			\item [(b)] $f$ is prox regular at $\bar x$ for $\bar y$ and there exist $\epsilon>0$ and a neighborhood $U\times V$ of $(\bar x,\bar y)$ such that $D_f(\partial f)$ is $\kappa$-lower-definite on $U_\epsilon^f\times V$.
		\end{enumerate}
	\end{theorem}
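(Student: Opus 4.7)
My plan is to first reduce to the case $\kappa = 0$ by invoking Lemma \ref{shift-var-convex}. Setting $f_\kappa := f - \frac{\kappa}{2}\|\cdot-\bar x\|^2$, variational convexity at $(\bar x,\bar y)$ with modulus $\kappa$ for $f$ is equivalent to variational convexity (with modulus zero) for $f_\kappa$; prox-regularity is likewise preserved since the two functions differ by a smooth quadratic; and the subdifferential sum rule $\partial f_\kappa(x) = \partial f(x) - \kappa(x-\bar x)$ induces the pointwise identity $D_{f_\kappa}(\partial f_\kappa)(x,y-\kappa(x-\bar x))(u) = D_f(\partial f)(x,y)(u) - \kappa u$, so the $\kappa$-lower-definiteness of $D_f(\partial f)$ on $U_\epsilon^f\times V$ is equivalent to the $0$-lower-definiteness of $D_{f_\kappa}(\partial f_\kappa)$ on the correspondingly shifted neighborhood. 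Hence I may assume $\kappa = 0$ throughout.

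For the direction $(a)\Rightarrow(b)$, let $\hat f$ be the convex lsc minorant provided by the definition of variational convexity, satisfying $U_\epsilon^f\times V\cap\gph\partial f = U\times V\cap\gph\partial\hat f$ and $f=\hat f$ on this intersection. For every $(x,y)$ in the matching set, convexity of $\hat f$ gives $\hat f(x')\geq \hat f(x)+\langle y, x'-x\rangle$ for $x'\in U$, and combining with $\hat f\leq f$ and $\hat f(x)=f(x)$ yields the prox-regularity inequality with modulus $0$, so $f$ is prox-regular at $\bar x$ for $\bar y$. For any $v\in D_f(\partial f)(x,y)(u)$ with defining sequences $s_k\searrow 0$ and $(u_k,v_k)\to(u,v)$, the convergences $f(x+s_k u_k)\to f(x)<f(\bar x)+\epsilon$ and $(x+s_k u_k, y+s_k v_k)\to(x,y)$ eventually confine the iterates to $U_\epsilon^f\times V\cap\gph\partial f = U\times V\cap\gph\partial\hat f$, so $v\in D(\partial\hat f)(x,y)(u)$. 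Since $\partial\hat f$ is monotone, Lemma \ref{slm-low-def} produces $\langle u,v\rangle\geq 0$, proving $D_f(\partial f)$ is $0$-lower-definite.

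For the direction $(b)\Rightarrow(a)$, I would first use prox-regularity and Proposition \ref{prop-prox-reg} to choose $\epsilon'>0$ small enough so that $\mathscr{F}_{\epsilon'}^f$ is locally maximal hypomonotone at $(\bar x,\bar y)$ and both the prox-regularity inequality (with modulus $\sigma>0$) and the hypothesized lower-definiteness of $D_f(\partial f)$ are available on a neighborhood of $(\bar x,\bar y)$. The key intermediate step is to upgrade the $f$-attentive lower-definiteness to ordinary lower-definiteness of $D\mathscr{F}_{\epsilon'}^f$ around $(\bar x,\bar y)$. Given $v\in D\mathscr{F}_{\epsilon'}^f(x,y)(u)$ with sequence $(x_k,y_k) = (x+s_k u_k, y+s_k v_k)\in\gph\mathscr{F}_{\epsilon'}^f$, the lsc of $f$ near $\bar x$ ensures $f(x_k)>f(\bar x)-\epsilon'$ eventually, so the prox-regularity inequality applied at $(x_k,y_k)$ with test point $x$ yields $f(x_k)\leq f(x)+s_k\langle y_k, u_k\rangle+\frac{\sigma s_k^2}{2}\|u_k\|^2$, whence $\limsup_k f(x_k)\leq f(x)$; combined with lsc this gives $f(x_k)\to f(x)$, placing $v\in D_f(\partial f)(x,y)(u)$ and activating the hypothesis $\langle u,v\rangle\geq 0$. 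Corollary \ref{mon-0low-def} then delivers local maximal monotonicity of $\mathscr{F}_{\epsilon'}^f$ at $(\bar x,\bar y)$.

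The remaining and hardest step is constructing the convex lsc function $\hat f$ required by variational convexity. A natural candidate is $\hat f(x) := \sup\{f(x_0)+\langle y_0, x-x_0\rangle : (x_0,y_0)\in\gph\mathscr{F}_{\epsilon'}^f\cap W\}$ on a small convex neighborhood $U$ of $\bar x$, which is automatically convex and lsc; the main obstacle is establishing $\hat f\leq f$ on $U$ together with the required subdifferential-matching property. The subgradient-type inequality $f(x)\geq f(x_0)+\langle y_0, x-x_0\rangle$ for $x\in U$ and $(x_0,y_0)\in\gph\mathscr{F}_{\epsilon'}^f\cap W$ is not a direct consequence of local monotonicity; I would derive it by integrating a single-valued Lipschitz selection of $\mathscr{F}_{\epsilon'}^f$ obtained from the resolvent $(\text{Id}+\lambda\mathscr{F}_{\epsilon'}^f)^{-1}$ along an absolutely continuous path joining $x_0$ to $x$, applying the chain rule of Lemma \ref{lemma_cotingent_lips}. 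The sequentially weakly lsc hypothesis on $f$ is precisely what enables the passage to weak limits in this integration step, furnishing the infinite-dimensional analogue of the finite-dimensional construction in \cite{gfrerer2025}.
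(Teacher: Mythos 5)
Your direction \emph{(a)}$\Rightarrow$\emph{(b)} and the first half of \emph{(b)}$\Rightarrow$\emph{(a)} follow the paper's route: pass to the $f$-attentive localization $\mathscr{F}_{\epsilon'}^f$, use the prox-regularity inequality to show $f(x+s_ku_k)\to f(x)$ and hence $\gph D\mathscr{F}_{\epsilon'}^f(x,y)\subset\gph D_f(\partial f)(x,y)$, and then invoke Corollary \ref{mon-0low-def} to obtain local maximal monotonicity of $\mathscr{F}_{\epsilon'}^f$ at $(\bar x,\bar y)$. (The preliminary reduction to $\kappa=0$ is legitimate but needs the level-set adjustment of Lemma \ref{shift-var-convex} -- for $\kappa>0$ the inclusion $\mathcal{W}^{f_\kappa}_{\epsilon}\subset U^f_{\epsilon}$ only holds after shrinking the radius as in that lemma -- the paper avoids this by shifting only when $\kappa<0$ and using Theorem \ref{prop-s-m-r} directly for $\kappa>0$.)

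The genuine gap is your final step: passing from local maximal monotonicity of $\mathscr{F}_{\epsilon'}^f$ to variational convexity. The paper does not construct the convex minorant by hand; it cites \cite[Theorems 6.6 and 6.10]{MR4784083}, which state precisely that local maximal (strong) monotonicity of the $f$-attentive localization of $\partial f$, together with sequential weak lower semicontinuity, yields variational convexity -- and that is where the weak lsc hypothesis actually enters. Your sketched substitute does not work as stated. First, in the $\kappa=0$ case there is no strong metric regularity, so $\mathscr{F}_{\epsilon'}^f$ admits no single-valued Lipschitz localization, and the resolvent $(\text{Id}+\lambda\mathscr{F}_{\epsilon'}^f)^{-1}$ only provides points of $\gph\mathscr{F}_{\epsilon'}^f$ indirectly; integrating it along a path via Lemma \ref{lemma_cotingent_lips} controls Moreau-envelope-type quantities, not the values of $f$ itself, so the inequality $f(x)\geq f(x_0)+\langle y_0,x-x_0\rangle$ for all $x$ in a full neighborhood $U$ does not come out of this computation. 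Second, that inequality is essentially equivalent to the conclusion you are proving (it is the subgradient inequality defining the convex minorant), and you concede it ``is not a direct consequence of local monotonicity'' while only indicating how you ``would'' derive it; in addition, even granting $\hat f\leq f$, the matching condition $U_\epsilon^f\times V\cap\gph\partial f=U\times V\cap\gph\partial\hat f$ for your sup-of-affine candidate (no extra subgradients of $\hat f$ inside $U\times V$) is left unverified. As it stands, the hardest part of the implication is asserted rather than proved; either carry out the construction in full (this is the nontrivial content of \cite[Theorem 6.6]{MR4784083}, or of the finite-dimensional argument in \cite{gfrerer2025}) or quote such a result explicitly.
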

	
	\begin{proof}
		\underline{\emph{(a)}$\Rightarrow$\emph{(b)}}: The prox regularity of $f$ is immediate since $f$ is variationally convex. For other side, there exist $\epsilon>0$, a neighborhood $U\times V$ of $(\bar x,\bar y)$ and a $\kappa$-convex and lsc function $\varphi\colon \H\to\R\cup\{\infty\}$ such that $f\geq \varphi$ on $U$, $$U_\epsilon^f\times V\cap \gph \partial f = U\times V\cap \gph \partial \varphi\text{ and } f=\varphi \text{ on } U_\epsilon^f\times V\cap \gph \partial f.$$ 
		Note that if $(x,y)\in U_\epsilon^f\times V\cap \gph \partial f$ and $(u,v)\in \gph D_f(\partial f)(x,y)$, there are $(u_k,v_k)\to (u,v)$ and $s_k\searrow 0$ such that $y+s_k v_k\in \partial f(x+s_ku_k)$ for all $k\in\N$ and $f(x + s_k u_k)\to f(x)$. Then, for all $k\in\N$ big enough, $$(x+s_k u_k,y+s_k v_k)\in U_\epsilon^f\times V\cap\gph \partial f.$$ The $\kappa$-convexity of $\varphi$ implies that for all $k\in\N$ big enough $$\langle x+s_k u_k - x,y+s_k v_k - y\rangle\geq \kappa\|x+s_k u_k - x\|^2,$$
		it follows that $\langle u_k,v_k\rangle\geq \kappa\|u_k\|^2 $, then taking $k\to\infty$ we conclude that $\langle u,v\rangle\geq \kappa\|u\|^2$ and it allows to conclude that $D_f(\partial f)$ is $\kappa$-lower-definite on $U_f^\epsilon\times V$.\\ 
		\underline{\emph{(b)}$\Rightarrow$\emph{(a)}}: Since $f$ is prox regular, we have there is $\bar \epsilon>0$ such that for all $\eta\in ]0,\bar \epsilon[$ the following localization $$\mathscr{F}_\eta^f\colon x\tto \{y : y\in \partial f(x) , \|x-\bar x\|<\eta, \|y-\bar y\|<\eta, f(x)<f(\bar x) + \eta \}$$ is locally maximal hypomonotone at $(\bar x,\bar y)$ (see Proposition \ref{prop-prox-reg}). Moreover, suppose \eqref{pr-def} holds for some $\epsilon'>0$ and $\sigma>0$. From now on, we fix $\eta<\min\{\bar \epsilon, \epsilon, \epsilon'\}$ such that $\mathbb{B}_\eta(\bar x)\times \mathbb{B}_\eta(\bar y)\subset U\times V$. We split this part by proving the following claims:\\
		{\emph{{\bf Claim 1}: For all $(x,y)\in \mathbb{B}_{\eta}(\bar x)\times \mathbb{B}_{\eta}(\bar y)$, $\gph D\mathscr{F}_\eta^f(x,y) \subset \gph D_f(\partial f)(x,y)$. }}\\
		\emph{Proof of Claim 1:} Indeed, let $(u,v)\in \gph D\mathscr{F}_\eta^f(x,y)$, then there are sequences $s_k\searrow 0$ and $(u_k,v_k)\to (u,v)$ such that for all $k\in\N$, $y+s_k v_k\in\mathscr{F}_\eta^f(x+s_k u_k)$. Note that it implies $y+s_k v_k\in \partial f(x+s_k u_k)$, $f(x+s_k u_k)<f(\bar x) + \eta$, $x+s_k u_k\in \mathbb{B}_{\eta}(\bar x)$ and $y+s_k v_k\in \mathbb{B}_{\eta}(\bar y)$ for all $k\in \N$. Then, regarding \eqref{pr-def}, we have for all $k\in\N$ $$f(x)\geq f(x+s_k u_k) + \langle y+s_k v_k, x-(x+s_k u_k)\rangle - \frac{\sigma}{2}\|x-(x+s_k u_k)\|^2.$$
		It follows that,
		$$ f(x)\geq f(x+s_k u_k) - s_k\langle y+s_kv_k,u_k \rangle - \frac{\sigma s_k^2}{2}\|u_k\|^2.$$
		Taking $k\to\infty$ we conclude that $\displaystyle\limsup_{k\to\infty} f(x+s_k u_k) \leq f(x)$ and since $f$ is lsc we conclude $\displaystyle f(x) = \lim_{k\to\infty} f(x+s_k u_k)$, it follows that $(u,v)\in \gph D_f(\partial f)(x,y)$.\\
		\emph{{\bf Claim 2}: $D\mathscr{F}_\eta^f$ is $\kappa$-lower-definite around $(\bar x ,\bar y)$.}\\
		\emph{Proof of Claim 2:} Define $\mathcal{U} := \mathbb{B}_\eta(\bar x)$ and $\mathcal{V} := \mathbb{B}_\eta(\bar y)$. Note that, Claim 1 implies that $D\mathscr{F}_\eta^f$ is $\kappa$-lower-definite on $\mathcal{U}_{\epsilon}^f\times \mathcal{V}$. Since $\eta<\epsilon$, we have $\dom \mathscr{F}_\eta^f\subset \mathcal{U}_{\epsilon}^f$. It follows that $D\mathscr{F}_\eta^f$ is $\kappa$-lower-definite on $\mathcal{U}\times \mathcal{V}$.\\
		\emph{{\bf Claim 3}: For $\kappa\geq 0$, (b)$\Rightarrow$(a) holds.}\\
		\emph{Proof of Claim 3:} Suppose $\kappa>0$. Since $\mathscr{F}_\eta^f$ is locally maximal hypomonotone at $(\bar x,\bar y)$ and $D\mathscr{F}_\eta^f$ is $\kappa$-lower-definite around $(\bar x,\bar y)$, by using Theorem \ref{prop-s-m-r} we have directly that $\mathscr{F}_\eta^f$ is strongly locally maximal monotone with modulus $\kappa$. From \cite[Theorem 6.10]{MR4784083}, since $f$ is also weakly sequentially lsc around $\bar x$, we conclude $f$ is variationally convex at $\bar x$ for $\bar y$ with modulus $\kappa$. When $\kappa = 0$, we proceed similarly, concluding by Corollary \ref{mon-0low-def} that $\mathscr{F}_\eta^f$ is locally maximal monotone at $(\bar x,\bar y)$. Finally, by \cite[Theorem 6.6]{MR4784083}, we conclude $f$ is variationally convex at $\bar x$ for $\bar y$.\\
		\emph{{\bf Claim 4}: (b)$\Rightarrow$(a) holds for $\kappa<0$.}\\
		\emph{Proof of Claim 4:} Consider $g := f-\frac{\kappa}{2}\|\cdot - \bar x\|^2$. First of all, note that since $f$ is weakly sequentially lsc around $\bar x$ and $\frac{-\kappa}{2}\|\cdot-\bar x\|^2$ is convex and continuous, the function $g$ is sequentially weakly lsc around $\bar x$. On the one hand, since $f$ is prox regular at $\bar x$ for $\bar y$, by virtue of Proposition \ref{prop-prox-reg} it is not difficult to see that $g$ is prox regular at $\bar x$ for $\bar y$. On the other hand, by the sum rule for limiting subdifferential, we have $\partial g(x) = \partial f(x) -\kappa(x-\bar x)$ for all $x\in \dom f$. We consider $\Pi_\kappa\colon (x,y)\mapsto (x,y+\kappa(x-\bar x))$. Consider $\delta>0$ such that $\mathbb{B}_\delta(\bar x)\times \mathbb{B}_\delta(\bar y)\subset \Pi_\kappa^{-1}(U\times V)$ and define $\mathscr{U} := \mathbb{B}_r(\bar x)$ and $\mathscr{V} = \mathbb{B}_\delta(\bar y)$. Consider $(x,y)\in \mathscr{U}_{\epsilon}^g\times \mathscr{V}$ and $(u,v)\in \gph D_g(\partial g)(x,y)$, then $$(u,v+\kappa u)\in \gph D_f(\partial f)(x,y+\kappa(x-\bar x))\text{ and }(x,y+\kappa(x-\bar x))\in U\times V.$$ We also have $x\in U_\epsilon^f$ due to $\kappa<0$. Since $D_f(\partial f)$ is $\kappa$-lower-definite on $U_\epsilon^f\times V$, we have $\langle u,v+\kappa u\rangle\geq \kappa\|u\|^2\Rightarrow\langle u,v\rangle\geq 0$. It follows that $D_g(\partial g)$ is $0$-lower-definite on $\mathscr{U}_{\epsilon}^g\times \mathscr{V}$. To finalize, note that $g$ satisfies the assumptions required to apply Claim 3 (with $\kappa=0$), then we have $g$ is variationally convex at $\bar x$ for $\bar y$, thus, applying Lemma \ref{shift-var-convex}, we conclude $f$ is variationally convex at $\bar x$ for $\bar y$ with modulus $\kappa$. 
	\end{proof}
	
	From the last result, we can derive a characterization of local convexity of a $\mathcal{C}^{1,+}$ function, i.e. a continuously differentiable function whose gradient is locally Lipschitz.
	
	\begin{corollary}\label{rmk-c11}
		Let us consider a function $f\colon \H\to \R\cup\{\infty\}$ which is of class $\mathcal{C}^{1,+}$ around a point $\bar x\in \dom f$ and also sequentially weakly lsc there. Let $\kappa\in\R$. Then, the following assertions are equivalent
		\begin{enumerate}
			\item [(a)] $f$ is locally $\kappa$-convex at $\bar x$.
			\item [(b)] $D(\nabla f)$ is $\kappa$-lower-definite around $(\bar x,\nabla f(\bar x))$.
		\end{enumerate}
		
	\end{corollary}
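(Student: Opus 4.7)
The plan is to deduce the corollary from Theorem \ref{char-var-convex} by showing that, for a $\mathcal{C}^{1,+}$ function, variational $\kappa$-convexity at $\bar x$ for $\nabla f(\bar x)$ reduces to ordinary local $\kappa$-convexity at $\bar x$, and that the $f$-attentive graphical derivative $D_f(\partial f)$ collapses to the classical $D(\nabla f)$. To set up the hypotheses of Theorem \ref{char-var-convex}, note that since $\nabla f$ is locally Lipschitz with some constant $L>0$ near $\bar x$, the fundamental theorem of calculus gives
\begin{equation*}
f(x') \geq f(x) + \langle \nabla f(x), x'-x\rangle - \frac{L}{2}\|x'-x\|^2
\end{equation*}
for all $x,x'$ in some neighborhood of $\bar x$, which is prox-regularity of $f$ at $\bar x$ for $\nabla f(\bar x)$. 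Moreover, $f$ is continuous near $\bar x$, so the condition $f(x+s_k u_k)\to f(x)$ built into the definition of $D_f(\partial f)$ is automatic, and hence $D_f(\partial f)(x,\nabla f(x)) = D(\nabla f)(x,\nabla f(x))$ on a neighborhood of $(\bar x,\nabla f(\bar x))$.

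Next I would establish the equivalence between variational $\kappa$-convexity of $f$ at $\bar x$ for $\nabla f(\bar x)$ and local $\kappa$-convexity of $f$ at $\bar x$. If $f$ is locally $\kappa$-convex on a convex open neighborhood $\mathcal{U}\ni\bar x$, then $\hat f := f + \delta(\cdot;\mathcal{U})$ is a $\kappa$-convex lsc function coinciding with $f$ on $\mathcal{U}$; since $\partial \hat f(x) = \{\nabla f(x)\}$ on the open set $\mathcal{U}$, the matching conditions of variational convexity hold trivially for any neighborhood $V$ of $\nabla f(\bar x)$ and any $\epsilon>0$. Conversely, if variational convexity holds with data $(U,V,\epsilon,\hat f)$, by continuity of $f$ and $\nabla f$ I pick an open convex neighborhood $W\subset U$ of $\bar x$ with $f(x)<f(\bar x)+\epsilon$ and $\nabla f(x)\in V$ for every $x\in W$. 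Each such $x$ then satisfies $(x,\nabla f(x))\in U_\epsilon^f\times V\cap \gph\partial f$, which forces $f(x)=\hat f(x)$ on $W$; the $\kappa$-convex inequality for $\hat f$ restricted to $W$ then transfers to $f$, yielding local $\kappa$-convexity.

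Combining these steps with Theorem \ref{char-var-convex} gives the result: \emph{(a)} is equivalent to variational $\kappa$-convexity of $f$ at $\bar x$ for $\nabla f(\bar x)$, which by the theorem is equivalent to prox-regularity together with $\kappa$-lower-definiteness of $D_f(\partial f) = D(\nabla f)$ on some set $U_\epsilon^f\times V$; prox-regularity is automatic and, since $f$ is continuous, $U_\epsilon^f$ contains a neighborhood of $\bar x$, so this last condition is exactly \emph{(b)}. The only delicate point is the neighborhood bookkeeping when passing between the variational and local forms of convexity; no substantive obstacle arises, because the smoothness and single-valuedness of $\partial f = \{\nabla f\}$ eliminate the subgradient-attentive subtleties that make Theorem \ref{char-var-convex} nontrivial for general nonsmooth $f$.
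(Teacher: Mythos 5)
Your proposal is correct and follows the same top-level strategy as the paper (reduce everything to Theorem \ref{char-var-convex} after checking that prox-regularity is automatic, that $D_f(\partial f)=D(\nabla f)$ locally, and that variational $\kappa$-convexity coincides with local $\kappa$-convexity for $\mathcal{C}^{1,+}$ functions), but you implement the sub-steps differently. For prox-regularity the paper combines Proposition \ref{prop-lips-max-hypo} with Proposition \ref{prop-prox-reg} (strong metric regularity of $(\nabla f)^{-1}$ gives local maximal hypomonotonicity of $\nabla f$, hence prox-regularity), whereas you prove it directly from the integral estimate $f(x')\geq f(x)+\langle\nabla f(x),x'-x\rangle-\frac{L}{2}\|x'-x\|^2$; your route is more elementary and self-contained, the paper's recycles machinery it needs anyway and yields maximal hypomonotonicity as a by-product. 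For the convexity equivalence, the paper only argues the direction ``variational $\kappa$-convexity $\Rightarrow$ local $\kappa$-convexity'', via Lemma \ref{shift-var-convex} and an external result for $\mathcal{C}^{1,+}$ functions (\cite[Remark 8.15 (iv)]{MR4769808}), leaving the converse implicit; you instead give a direct two-way argument, which makes the corollary independent of that external reference. One small repair is needed in your converse construction: the definition of variational convexity requires $\hat f$ to be \emph{lsc}, and $\hat f=f+\delta(\cdot;\mathcal U)$ with $\mathcal U$ open (the paper's own format for local $\kappa$-convexity) is generally not lsc at $\bd\mathcal U$; take instead $\hat f=f+\delta(\cdot;\mathbb{B}_r[\bar x])$ for a closed ball $\mathbb{B}_r[\bar x]\subset\mathcal U$ and choose $U\subset\mathbb{B}_r(\bar x)$, shrinking $U$ by continuity of $f$ so that $U_\epsilon=U$; with this adjustment your matching conditions and the transfer of the $\kappa$-convex inequality go through exactly as you describe.
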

	\begin{proof}
		The function $f$ is prox regular at $\bar x$ for $\nabla f(\bar x)$, as a consequence of combining Proposition \ref{prop-lips-max-hypo} and Proposition \ref{prop-prox-reg}. Note that $f$ is also subdifferentially continuous at $\bar x$ for $\nabla f(\bar x)$. Moreover, if $f$ is variationally convex at $\bar x$ for $\nabla f(\bar x)$ with modulus $\kappa$, then the function $\bar f = f - \frac{\kappa}{2}\|\cdot - \bar x\|^2$ is variationally convex at $\bar x$ for $\nabla f(\bar x)$ (Lemma \ref{shift-var-convex}), and it is still of class $\mathcal{C}^{1,+}$ around $\bar x$, then by \cite[Remark 8.15 (iv)]{MR4769808} we have $\bar f$ is locally convex around $\bar x$ and therefore $f$ is locally $\kappa$-convex around $\bar x$. Then, Theorem \ref{char-var-convex} allows us to conclude the equivalence between \emph{(a)} and \emph{(b)}.
	\end{proof}

    \section{Concluding Remarks}
        In this paper, we have established new characterizations of local maximal monotonicity for set-valued mappings based on the lower-definiteness of the graphical derivative. In particular, Theorem \ref{prop-s-m-r} provides three distinct characterizations of strong maximal monotonicity. Finally, in Theorem \ref{char-var-convex}, we apply our main results to characterize variational convexity, leveraging its connection with the local maximal monotonicity properties of the subdifferential mapping. This can be seen as a second-order characterization of variational convexity, complementing the result in \cite[Theorem 7.3]{MR4784083}, where coderivatives are used and subdifferential continuity is required.
        
        The results obtained strengthen the theory of locally monotone operators and offer potential applications in second-order theory, stability of solution mappings, and related areas. In concrete, some notions of Hölderian and Lipschitzian full stability of parametric variational systems are introduced and characterized in \cite{MR3485980} (see also \cite{MR3881947} for related results), where the connection of these notions with strong local maximal monotonicity is crucial. The solution mapping under consideration corresponds to
        \begin{equation*}
            S(p,v) = \{ x\in \H : v\in f(p,x) + \partial_x g(p,x) \}
        \end{equation*}
        where $f\colon P\times \H\to \R$ and $g\colon P\times \H\to \R\cup\{\infty\}$ are given functions and $P$ represents the space of parameters. In particular, when the parameter $p$ is omitted, the aforementioned concepts of full stability corresponds to strong local maximal monotonicity of $f+\partial g$ (see \cite[Lemma 3.3]{MR3485980}), which can be characterized by using Theorem \ref{prop-s-m-r} in terms of lower definiteness of its graphical derivative. However, the parameterized problem is more challenging and it could be investigated in future works in order to have a graphical derivative counterpart of \cite[Theorem 4.3]{MR3485980}.

\noindent {\bf Acknowledgments} We are grateful to the anonymous referees for their valuable comments and suggestions. Also, we thank Pedro Pérez-Aros and Emilio Vilches for insightful discussions that significantly improved the quality of the manuscript.

\section*{Declarations}
\noindent {\bf Data availability} No data was generated or analyzed.

\noindent {\bf Conflict of interest}  The authors have not disclosed any competing interests.

	\bibliographystyle{plain}
	\bibliography{bib}
	
\end{document}